\newcommand{\bN} { {\mathbb{N}}}
\newcommand{\bC} { {\mathbb{C}}}
\newcommand{\bQ} { {\mathbb{Q}}}
\newcommand{\bZ} { {\mathbb{Z}}}
\newcommand{\bF} { {\mathbb{F}}}
\newcommand{\bE} { {\mathbb{E}}}
\newcommand{\abs}[1]{\lvert#1\rvert}
\newcommand{\simq}{{\sim_{q^\bZ}}}
\newcommand{\si} { {\sigma}}
\newcommand{\disp}{\operatorname{disp}}
\newcommand{\qdisp}{\operatorname{qdisp}}
\newcommand{\dres}{\operatorname{dres}}
\newcommand{\qres}{\operatorname{qres}}
\newcommand{\Tr}{{\operatorname{Tr}}}
\newcommand{\bark}{{\overline{k}}}
\newcommand{\QED}{{ }}
\newtheorem{theorem}{Theorem}[section]
\newtheorem{lemma}[theorem]{Lemma}
\newtheorem{definition}[theorem]{Definition}
\newtheorem{fact}[theorem]{Fact}
\newtheorem{prop}[theorem]{Proposition}
\newtheorem{example}[theorem]{Example}
\newtheorem{remark}[theorem]{Remark}
\begin{document}

\title{On the Summability of Bivariate Rational Functions\thanks{This work was supported in part by NSF Grant CCF-1017217.}
}
\author{Shaoshi Chen and Michael F.\ Singer\footnote{Email Addresses:~{\tt schen21@ncsu.edu}~(S.\ Chen),
{\tt singer@math.ncsu.edu}~(M.\ F.\ Singer).}\, \footnote{Corresponding Author: Michael F. Singer, Department of Mathematics,
North Carolina State University, Box 8205, Raleigh, NC 27695-8205,
{\tt singer@math.ncsu.edu}, Telephone:  919 515 2671, Fax:  919 515 3798} }


\maketitle

\begin{abstract}
We present criteria for deciding whether a bivariate rational function in two variables
can be written as a sum of two ($q$-)differences of bivariate rational functions. Using these criteria, we show how certain double sums
can be evaluated, first, in terms of single sums and, finally, in terms of values of special functions.
\end{abstract}

\section{Introduction}\label{SECT:intro}
 The method of residues has been a powerful tool for investigating various problems
in algebra, analysis, and
combinatorics~\cite{Gelfand1971, Egorychev1984, Mitrinovic1984, Mitrinovic1993}.
This paper is a further example of the method of residues the authors used in~\cite{ChenSinger2012}.
By focusing on residues, we are able to give a unified approach to problems in the shift and
$q$-shift cases while also identifying where these cases differ.

The general question considered in this paper was raised by Andrews and Paule in~\cite{Andrews1993}:
\begin{center}
\begin{minipage}[t]{11cm}
\lq\lq Is it possible to provide any algorithmic device for
reducing multiple sums to single ones?\rq\rq
\end{minipage}
\end{center}
The single sums are much more easily handled due to the celebrated Gopser algorithm~\cite{Gosper1978}
for hypergeometric terms.
The Gosper algorithm decides whether a hypergeometric term~$T(n)$ is equal to the difference of
another hypergeometric term.
If such a hypergeometric term exists, we say that~$T(n)$ is~\emph{hypergeometric summable}.
Passing from univariate to multivariate, the first step has been started in the work
by Chen et al.~in~\cite{ChenHouMu2006}. However, they only obtained necessary conditions for hypergeometric summability of
bivariate hypergeometric terms. As a starting point, we focus on the double sums of rational functions.
With the help of the discrete and $q$-discrete analogues of usual complex residues in analysis,
we derive necessary and sufficient conditions, which allow us to decide whether a rational
function in two variables can be written as a sum of two ($q$-)differences of rational functions.

For a precise description, let~$\bF$ be an algebraically closed field of characteristic zero and~$\bF(x, y)$ be the field
of rational functions in~$x$ and~$y$ over~$\bF$. Let~$\phi$ and~$\varphi$ be two automorphisms of~$\bF(x, y)$.
A rational function~$f\in \bF(x, y)$ is said to be~\emph{$(\phi, \varphi)$-summable} in~$\bF(x, y)$ if there exist~$g, h\in \bF(x, y)$
such that
\[f = \phi(g) - g  + \varphi(h) -h.\]
The problem we study is the following.

\begin{center}
\begin{minipage}[t]{11.8cm}{\bf Bivariate Summability Problem.}
    Given a rational function~$f\in \bF(x, y)$, decide whether or not~$f$ is $(\phi, \varphi)$-summable in~$\bF(x, y)$.
\end{minipage}
\end{center}
To make the problem more tractable, we will make some restriction on~$\phi$ and~$\varphi$.
We define shift operators~$\si_x$ and~$\si_y$ in~$\bF(x, y)$
as
\[\si_x(f(x, y)) = f(x+1, y) \quad \text{and}\quad \si_y(f(x, y))= f(x, y+1)\]
for all~$f\in \bF(x, y)$.
For~$q\in \bF\setminus \{0\}$, we define~$q$-shift operators~$\tau_x$ and~$\tau_y$ in~$\bF(x, y)$ as
\[\tau_x(f(x, y)) = f(qx, y) \quad \text{and}\quad \tau_y(f(x, y))= f(x, qy) \quad \text{for all~$f\in \bF(x, y)$.}\]
In this paper, we will solve the problem above in two cases: one is that~$\phi=\si_x$ and~$\varphi = \si_y$ and the other is that
$\phi=\tau_x$ and~$\varphi = \tau_y$.

The continuous counterpart, namely \emph{bivariate integrability problem}, \linebreak traces back to the works by Poincar\'{e}~\cite{Poincare1887}
and Picard~\cite{Picard1897}.  Let~$D_x$ and~$D_y$ denote the derivations with respect to~$x$ and~$y$, respectively.
The problem is to decide whether a rational function~$f\in \bF(x, y)$
is equal to~$D_x(g) + D_y(h)$ for some~$g, h\in \bF(x, y)$.
In~\cite[vol 2, page 220]{Picard1897}, Picard gave a necessary and sufficient condition, which says that such a pair~$(g, h)$ exists for~$f$ if and
only if all residues of~$f$ with respect to~$y$ as algebraic functions in~$\overline{\bF(x)}$
are equal to derivatives of other algebraic functions.
For a more elementary proof of Picard's criterion and many applications, one can see the paper~\cite{ChenKauersSinger2012}.
So the criteria in this paper can be viewed as a discrete and $q$-discrete analogue of Picard's criterion.

A gallery of all results can be illustrated by the rational function
\[f = \frac{1}{x^n + y^n}, \quad \text{where~$n\in \bN\setminus \{0\}$.}\]
\begin{itemize}
  \item The continuous case (Example 5 in~\cite{ChenKauersSinger2012}):
  \[ \text{$f = D_x(g) + D_y(h)$ for some~$g, h \in \bF(x, y)$ $\quad \Leftrightarrow \quad$ ~$n\neq 2$.} \]
  \item The discrete case (Example~\ref{EXAM:ratsum} below):
  \[ \text{$f = \si_x(g)-g + \si_y(h)-h$ for some~$g, h \in \bF(x, y)$ $\quad \Leftrightarrow \quad$ ~$n=1$.} \]
  \item The $q$-discrete case:
  \begin{itemize}
    \item $q$ is a root of unity with~$q^m=1$ and $m$ minimal (Example~\ref{EXAM:qratsumru} below):
\[ \text{$f = \tau_x(g)-g + \tau_y(h)-h$ for some~$g, h \in \bF(x, y)  \Leftrightarrow$ $n \neq 0\, {\rm mod} \, m$.} \]
    \item $q$ is not a root of unity (Example~\ref{EXAM:qratsumnru} below): For all $n\in \bN\setminus \{0\}$, 
        \[ \text{ $f = \tau_x(g)-g + \tau_y(h)-h$ for some $g, h \in \bF(x, y)$}. \]
  \end{itemize}
\end{itemize}

Using the ($q$)-summability criteria, we show some identities between double sums and single sums. For instance,
\[\sum_{n=1}^{\infty}\sum_{m=1}^{\infty} \frac{1}{(m+n/2)^3} = 4\zeta(2)-\frac{9}{2}\zeta(3),\]
and
\[\sum_{a= 1}^\infty\sum_{b= 1}^\infty\frac{1}{q^{an} + q^{bn}} = \frac{1}{1-q^n}\left(-\frac{1}{2} + 2 {\rm L}_1(-1,1/q^n))\right)\]
where ${\rm L}_1(x,q)$ is the $q$-logarithm (see Example~\ref{EXAM:qratsumnru}).

The rest of this paper is organized as follows. In Section~\ref{SECT:dres}, we recall the notion of discrete
residues and their  $q$-analogues, introduced in~\cite{ChenSinger2012}. In terms of these residues, we
review the necessary and sufficient conditions for the summability of univariate rational functions.
The importance of discrete residues and their $q$-analogues lies in reducing the problem of summability of bivariate rational
functions to that of summability of univariate algebraic
functions. In Section~\ref{SECT:criteria}, we present a necessary and sufficient condition on the
summability of rational functions in two variables and also some examples.

\section{Summability problem: the univariate case}\label{SECT:dres}
Let~$\bE$ be an algebraically closed field of characteristic zero.
In the next section, we will take~$\bE$ to be the algebraic closure of the field~$\bF(x)$.
Let~$\bE(y)$ be the field of rational functions in~$y$ over~$\bE$.
Let~$\phi$ be an automorphism of~$\bE(y)$ that fixes~$\bE$. A rational function~$f\in \bE(y)$
is said to be~\emph{$\phi$-summable} in~$\bE(y)$ if~$f = \phi(g) -g$ for some~$g\in \bE(y)$.
The goal of this section is to solve the following problem.
\begin{center}
\begin{minipage}[t]{11.8cm}{\bf Univariate Summability Problem.}
For a given~$\bE$-automorphism~$\phi$ of~$\bE(y)$ and~$f\in \bE(y)$,
decide whether~$f$ is~$\phi$-summable in~$\bE(y)$ or not.
\end{minipage}
\end{center}
This problem will be reduced into two special summability problems, which have been
extensively studied in~\cite{Abramov1975, Pirastu1995a, Paule1995b, Marshall2005, Matusevich2000, ChenSinger2012}.
It is well-known that~$\phi$
is the linear fractional transformation (see~\cite[p.\ 181-182]{Weintraub2009}) uniquely determined by
\[\phi(y) = \frac{ay+b}{cy+d}, \quad \text{where~$a, b, c, d\in \bE$ and~$ad-bc\neq 0$.}\]
Let~$A$ denote the matrix~$\bigl(\begin{smallmatrix}
a&b\\ c&d
\end{smallmatrix} \bigr)$. Then the action of~$A$ on~$\bE(y)$ can be naturally defined as
\[A(f(y)) = f\left(\frac{ay+b}{cy+d}\right).\]
Let~$A=BJB^{-1}$ be the Jordan decomposition of~$A$ over~${\bE}$ with~$B\in \text{GL}(2, {\bE})$ and~$J$ is
one of the following forms:
\begin{itemize}
  \item[(i).] The shift case:
  \[ J = \begin{pmatrix}
\lambda & 1\\ 0 & \lambda
\end{pmatrix}, \quad \text{where~$\lambda\in {\bE}\setminus \{0\}$.}\]
In this case, we have~$J(y)  = y + \frac{1}{\lambda}$. Furthermore, we decompose~$J$ into
the product~$\varphi \si \varphi^{-1}$, where~$\varphi(y) = \frac{y}{\lambda}$ and~$\si(y)=y+1$.
  \item[(ii).] The~$q$-shift case:
  \[J = \begin{pmatrix}
\lambda & 0\\ 0&\mu
\end{pmatrix}, \quad \text{where~$\lambda, \mu\in {\bE}\setminus \{0\}$.}\]
In this case, we have~$J(y) = qy$ with~$q=\lambda/\mu \in {\bE}$.
\end{itemize}

As in the introduction, we let~$\si_y, \tau_y$ denote the shift and~$q$-shift operators with respect to~$y$ in~$\bE(y)$, respectively.
Let~$\phi_1, \phi_2$ be two $\bE$-automorphisms of~$\bE(y)$
such that~$\phi_1 = \varphi \phi_2 \varphi^{-1}$ for some $\bE$-automorphisms~$\varphi$ of~$\bE(y)$. Then the problem
of deciding whether~$f\in \bE(y)$ is~$\phi_1$-summable in~$\bE(y)$ or not is equivalent to that of deciding
whether~$\varphi^{-1}(f)$ is~$\phi_2$-summable in~$\bE(y)$ or not.
According to the discussion as above, any~$\bE$-automorphism is similar to either the shift operator
or the~$q$-shift operator over~${\bE}$. So the {\bf Univariate Summability Problem} can be
reduced into the usual summability and~$q$-summability problems. We will discuss those two cases separately.

\subsection{The shift case} \label{SUBSECT:ushift}
In this case, we consider the problem of deciding whether a given rational function~$f\in \bE(y)$
is equal to the difference~$\si_y(g)-g$ for some~$g\in \bE(y)$. We review
a necessary and sufficient condition in terms of the discrete analogue of the usual residues in complex analysis
from~\cite{ChenSinger2012}.

For an element~$\alpha \in {\bE}$, we call the subset~$\alpha + \bZ$ the~\emph{$\bZ$-orbit} of~$\alpha$
in~${\bE}$, denoted by~$[\alpha]$. Two elements~$\alpha_1, \alpha_2$ are said to be~\emph{$\bZ$-equivalent} if
they are in the same~$\bZ$-orbit, denoted by~$\alpha_1\sim_{\bZ}\alpha_2$.
For a polynomial~$p\in \bE[y]\setminus \bE$, the value
\[\max\{i\in \bZ \mid \text{$\exists \, \alpha, \beta \in {\bE}$
such that~$i=\alpha-\beta$ and~$p(\alpha)=p(\beta)=0$}\}\]
is called the~\emph{dispersion} of~$p$ with respect to~$y$,
denoted by~$\disp_y(p)$. A polynomial~$p\in \bE[y]$ is said to
be~\emph{shift-free} with respect to~$y$ if~$\disp_y(p)=0$.
Let~$f=a/b\in \bE(y)$ be such that~$a, b\in \bE[y]$ and~$\gcd(a, b)=1$.
Since the field~${\bE}$ is algebraically closed, $f$~can be decomposed into the form
\begin{equation}\label{EQ:dparfrac}
f =  p + \sum_{i=1}^m \sum_{j=1}^{n_i} \sum_{\ell=0}^{d_{i,j}} \frac{\alpha_{i,j,\ell}}{(\si_y^{-\ell}(y)-\beta_i)^{j}},
\end{equation}
where~$p\in \bE[y]$,  $m, n_i, d_{i,j}\in \bN$, $\alpha_{i, j, \ell}, \beta_i\in {\bE}$, and~$\beta_i$'s are
in distinct $\bZ$-orbits. We introduce a discrete analogue of the usual residues for rational functions, which is
motivated by the following fact.
\begin{fact}\label{FACT:1}
Let~$\phi$ be any~$\bE$-automorphism of~$\bE(y)$ and~$\alpha, \beta\in \bE$. Then for all~$m, n$ in~$\bN$ we have
\[\frac{\alpha}{(\phi^n(y) -\beta)^m} = \phi(g)-g + \frac{\alpha}{(y-\beta)^m},\quad
\text{where~$g=\sum_{j=0}^{n-1}\frac{\alpha}{(\phi^{j}(y)-\beta)^m}$}.\]
\end{fact}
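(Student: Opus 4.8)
The plan is to verify the stated identity by a direct telescoping computation, exactly as one does in the classical Gosper setting. The key point is that $\phi$ is a field automorphism of $\bE(y)$ fixing $\bE$ pointwise, so it fixes the constants $\alpha$ and $\beta$ and satisfies $\phi(\phi^{j}(y)) = \phi^{j+1}(y)$ for every $j \ge 0$; hence $\phi$ acts on each summand of $g$ simply by raising the iteration index of $\phi$ in the denominator by one.

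First I would record that each term $\alpha/(\phi^{j}(y)-\beta)^{m}$ is a well-defined element of $\bE(y)$: since $\phi^{j}$ is an automorphism of $\bE(y)$, the element $\phi^{j}(y)$ is non-constant, so $\phi^{j}(y)-\beta$ is a nonzero element of $\bE(y)$ and its powers are invertible. In particular $g = \sum_{j=0}^{n-1} \alpha/(\phi^{j}(y)-\beta)^{m}$ is a legitimate element of $\bE(y)$ (the empty sum when $n=0$), and the argument below is uniform in $m \in \bN$.

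Next, applying $\phi$ termwise and using $\phi(\alpha)=\alpha$, $\phi(\beta)=\beta$, and $\phi(\phi^{j}(y)) = \phi^{j+1}(y)$, I obtain
\[
\phi(g) = \sum_{j=0}^{n-1} \frac{\alpha}{(\phi^{j+1}(y)-\beta)^{m}} = \sum_{j=1}^{n} \frac{\alpha}{(\phi^{j}(y)-\beta)^{m}}.
\]
Subtracting $g$ from this, all terms with $1 \le j \le n-1$ cancel, leaving
\[
\phi(g) - g = \frac{\alpha}{(\phi^{n}(y)-\beta)^{m}} - \frac{\alpha}{(\phi^{0}(y)-\beta)^{m}} = \frac{\alpha}{(\phi^{n}(y)-\beta)^{m}} - \frac{\alpha}{(y-\beta)^{m}},
\]
since $\phi^{0}$ is the identity map. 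Rearranging gives the claimed formula, and the degenerate case $n=0$ reduces to the trivial identity $\alpha/(y-\beta)^{m} = \alpha/(y-\beta)^{m}$.

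Since the whole argument is a one-line telescoping, there is no genuine obstacle here; the only points deserving a word of justification are the well-definedness of the summands and the bookkeeping in the reindexing $j \mapsto j+1$, both of which are immediate consequences of $\phi$ being an $\bE$-automorphism of $\bE(y)$.
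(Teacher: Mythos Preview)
Your proof is correct. The paper states this as a Fact without proof, and your direct telescoping verification is precisely the intended (and essentially only) argument; there is nothing to add.
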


\begin{definition}[Discrete residue]\label{DEF:dres}
Let~$f\in \bE(y)$ be of the form~\eqref{EQ:dparfrac}.
The sum $\sum_{\ell=0}^{d_{i,j}}\alpha_{i,j, \ell}\in {\bE}$
is called the \emph{discrete residue} of~$f$ at
the $\bZ$-orbit $[\beta_i]$ of multiplicity~$j$ with respect to~$y$,
denoted by~$\dres_y(f, [\beta_i], j)$.
\end{definition}

We recall a criterion on the summability in~$\bE(y)$
via discrete residues.
\begin{prop}[c.f. Prop. 2.5 in~\cite{ChenSinger2012}]\label{PROP:ratsum}
Let~$f=a/b\in \bE(y)$ be such that~$a, b\in \bE[y]$ and~$\gcd(a, b)=1$.
Then~$f$ is $\si_y$-summable in~$\bE(y)$ if and only if the discrete residue $\dres_y(f, [\beta], j)$
is zero for any $\bZ$-orbit~$[\beta]$ with~$b(\beta)=0$ of any multiplicity~$j\in \bN$.
\end{prop}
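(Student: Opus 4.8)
The plan is to reduce everything to the ordinary partial-fraction expansion of $f$ over $\bE$ together with a representative-free description of the discrete residue. First I would record, as a short preliminary lemma, that if
\[f = p + \sum_{\gamma \in \bE}\ \sum_{j \geq 1}\ \frac{c_{\gamma,j}}{(y-\gamma)^j}, \qquad p \in \bE[y],\]
is the (finite) ordinary partial-fraction expansion of $f$, then comparing with~\eqref{EQ:dparfrac} and invoking uniqueness of partial fractions gives $\alpha_{i,j,\ell} = c_{\beta_i+\ell,\,j}$, so that
\[\dres_y(f,[\beta],j) \ =\ \sum_{\gamma \in [\beta]} c_{\gamma,j}.\]
In particular $\dres_y(f,[\beta],j)$ does not depend on the choice of representatives $\beta_i$ in~\eqref{EQ:dparfrac} and is $\bE$-linear in $f$. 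I expect this bookkeeping to be the only genuinely delicate point, since the $\bZ$-orbits carrying the poles of $f_1$, of $f_2$, and of $f_1+f_2$ need not coincide and the representatives chosen for each may differ; once the orbit-sum formula is available, both implications are essentially formal.

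For the direction ``all discrete residues vanish $\Longrightarrow$ $f$ is $\si_y$-summable'', I would apply Fact~\ref{FACT:1} with $\phi = \si_y^{-1}$ and exponent $\ell$ to each summand of~\eqref{EQ:dparfrac}: this rewrites $\frac{\alpha_{i,j,\ell}}{(\si_y^{-\ell}(y)-\beta_i)^j}$ as $\frac{\alpha_{i,j,\ell}}{(y-\beta_i)^j}$ plus a term of the shape $\si_y^{-1}(g)-g$, which is itself $\si_y$-summable because $\si_y^{-1}(g)-g = \si_y\bigl(-\si_y^{-1}(g)\bigr)-\bigl(-\si_y^{-1}(g)\bigr)$. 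Summing over $\ell$, the function $f$ is congruent, modulo $\si_y$-summable functions, to $p + \sum_{i,j}\bigl(\sum_\ell \alpha_{i,j,\ell}\bigr)\frac{1}{(y-\beta_i)^j} = p + \sum_{i,j}\dres_y(f,[\beta_i],j)\,\frac{1}{(y-\beta_i)^j}$. Under the hypothesis this collapses to the polynomial $p$, and every polynomial in $\bE[y]$ is $\si_y$-summable because $\si_y-1$ maps $\bE[y]$ onto $\bE[y]$: in characteristic zero $\si_y(y^{d+1})-y^{d+1}$ has degree $d$ with nonzero leading coefficient, so a preimage is built by descending induction on the degree. Hence $f$ is $\si_y$-summable.

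For the converse, suppose $f = \si_y(g)-g$ for some $g \in \bE(y)$. Writing the ordinary partial-fraction expansion $g = p_g + \sum_\delta \sum_k \frac{\tilde c_{\delta,k}}{(y-\delta)^k}$, one has $\si_y(g)(y) = p_g(y+1) + \sum_\delta \sum_k \frac{\tilde c_{\delta,k}}{(y-(\delta-1))^k}$, so in $f$ the coefficient of $(y-\gamma)^{-j}$ equals $\tilde c_{\gamma+1,\,j}-\tilde c_{\gamma,\,j}$. Feeding this into the orbit-sum formula of the preliminary lemma,
\[\dres_y(f,[\beta],j) \ =\ \sum_{n \in \bZ}\bigl(\tilde c_{\beta+n+1,\,j}-\tilde c_{\beta+n,\,j}\bigr),\]
a telescoping sum whose symmetric partial sum from $n=-N$ to $n=N$ equals $\tilde c_{\beta+N+1,\,j}-\tilde c_{\beta-N,\,j}$; since $g$ has only finitely many poles, this is $0$ for $N$ large, so $\dres_y(f,[\beta],j)=0$. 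This establishes necessity and, together with the previous paragraph, the whole statement.

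A final remark on strategy: this three-move pattern — pass to partial fractions, trade each shifted term for an unshifted one via Fact~\ref{FACT:1}, and read off the obstruction as a telescoping sum over an orbit — is exactly what will reappear in the $q$-shift case with $\bZ$-orbits replaced by $q^{\bZ}$-orbits, so isolating the orbit-sum formula now is worthwhile beyond the present proof.
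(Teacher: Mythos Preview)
Your proof is correct. Note, however, that the paper does not actually prove Proposition~\ref{PROP:ratsum}: it is stated with the citation ``c.f.\ Prop.~2.5 in~\cite{ChenSinger2012}'' and simply recalled from that reference without argument, so there is no in-paper proof to compare against. Your self-contained treatment---establishing the representative-free orbit-sum formula for $\dres_y$, reducing $f$ modulo $\si_y$-summables via Fact~\ref{FACT:1} to $p+\sum_{i,j}\dres_y(f,[\beta_i],j)(y-\beta_i)^{-j}$, and handling necessity by a telescoping orbit sum---is exactly the standard argument and is fine as a replacement for the bare citation. Your observation that the orbit-sum formula makes $\dres_y$ well-defined and $\bE$-linear is a useful point that the paper leaves implicit.
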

In terms of discrete residues, we derive a normal form for a rational function in the
quotient space~$\bE(y)/((\si_y-1)(\bE(y)))$. Let~$f$ be of the form~\eqref{EQ:dparfrac}.
Then we can decompose it into~$f = \si_y(g)-g + r$, where~$g, r\in \bE(y)$ and
\[\text{$r=\sum_{i=1}^m\sum_{j=1}^{n_i} \frac{\dres_y(f, [\beta_i], j)}{(y-\beta_i)^j}$
\, \, with~$\beta_i$'s being in distinct $\bZ$-orbits.}\]
The condition above on~$r$ is equivalent to the condition that the rational function~$r$ is proper and its denominator is
shift-free with respect to~$y$.
Such an additive decomposition can be computed
by the algorithms in~\cite{Abramov1975, Abramov1995b, Paule1995b, Pirastu1995a, Pirastu1995b}.

\subsection{The $q$-shift case}\label{SUBSECT:uqshift}
Let~$q$ be an element of~$\bE$. We consider the problem of deciding whether a
given rational function~$f\in \bE(y)$ is equal to the difference~$\tau_y(g)-g$
for some~$g\in \bE(y)$.

We first study the case in which~$q$ is a root of unity.
Assume that~$m$ is the minimal positive integer such that~$q^m=1$. We  do not assume that $\bE$ is algebraically closed
but rather only assume that $\bE$ contains all $m^{th}$ roots of unity.
It is easy to show that~$\tau_y(f)=f$ if and only if~$f\in \bE(y^m)$.
Let~$p =z^m -y^m\in \bE(y^m)[z]$.
By the assumption that~$\bE$ contains all $m^{th}$ roots of unity, $\bE(y)$ is the splitting field of~$p$ over~$\bE(y^m)$.
Since~$\bE$ is of characteristic zero, $\bE(y)$ is a Galois extension of~$\bE(y^m)$ and its Galois group is cyclic and generated by~$\tau_y$.
 We now derive a normal form for rational functions in~$\bE(y)$ with respect to~$\tau_y$.
\begin{lemma}\label{LM:qru}
Let~$q$ be such that~$q^m=1$ with~$m$ minimal and let $f \in \bE(y)$.\begin{enumerate}
\item[(a)] $f = \tau_y(g) - g$ for some $g \in \bE(y)$ if and only if the trace $\Tr_{\bE(y)/\bE(y^m)}(f) = 0$.
\item[(b)] Any rational function~$f\in \bE(y)$ can be decomposed into
\begin{equation}\label{rook}
f = \tau_y(g) - g + c, \quad \text{where~$g\in \bE(y)$ and~$c\in \bE(y^m)$}.
\end{equation}
Moreover, $f$ is~$\tau_y$-summable in~$\bE(y)$ if and only if~$c=0$.
\end{enumerate}
\end{lemma}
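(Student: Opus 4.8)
The plan is to recognize this lemma as the additive Hilbert Theorem~90 for the cyclic Galois extension $\bE(y)/\bE(y^m)$ whose group is generated by $\tau_y$ and has order $m$ (as recalled just above), and to make everything explicit by exploiting that $1/m$ is a trace-generating element of the base field. Throughout I use that $\Tr_{\bE(y)/\bE(y^m)} = \sum_{i=0}^{m-1}\tau_y^i$ on $\bE(y)$, that $\tau_y^m = \mathrm{id}$, and that this trace maps into the fixed field $\bE(y^m)$.

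For the easy direction of (a), suppose $f = \tau_y(g)-g$ with $g\in\bE(y)$. Then
\[
\Tr_{\bE(y)/\bE(y^m)}(f)=\sum_{i=0}^{m-1}\bigl(\tau_y^{i+1}(g)-\tau_y^i(g)\bigr)=\tau_y^m(g)-g=0,
\]
the sum telescoping since $\tau_y^m=\mathrm{id}$. The remaining content of both (a) and (b) comes from a single explicit identity. Set
\[
g \;=\; -\frac{1}{m}\sum_{i=0}^{m-2}(m-1-i)\,\tau_y^i(f)\ \in\ \bE(y),
\]
which is legitimate because $\mathrm{char}(\bE)=0$. A reindexing of $\tau_y(g)$ (substitute $j=i+1$, so that $m-1-i=m-j$) and comparison of coefficients of each $\tau_y^j(f)$ shows that $f$ gets coefficient $(m-1)/m$ while each $\tau_y^j(f)$ with $1\le j\le m-1$ gets coefficient $-1/m$; grouping $-\tfrac1m\sum_{j=1}^{m-1}\tau_y^j(f)=-\tfrac1m\bigl(\Tr_{\bE(y)/\bE(y^m)}(f)-f\bigr)$ then yields
\[
\tau_y(g)-g \;=\; f \;-\; \frac{1}{m}\,\Tr_{\bE(y)/\bE(y^m)}(f).
\]
This telescoping/bookkeeping computation is the heart of the lemma and the only place where care is needed.

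With this in hand, (b) follows immediately: put $c:=\tfrac1m\Tr_{\bE(y)/\bE(y^m)}(f)$, which lies in $\bE(y^m)$ since the trace does, and the displayed identity reads $f=\tau_y(g)-g+c$. For the "if" direction of (a), when $\Tr_{\bE(y)/\bE(y^m)}(f)=0$ we get $c=0$, so $f=\tau_y(g)-g$ with the $g$ above. For the "moreover" in (b): if $c=0$ the decomposition exhibits $f$ as $\tau_y$-summable, and conversely if $f$ is $\tau_y$-summable then $\Tr_{\bE(y)/\bE(y^m)}(f)=0$ by part (a), forcing $c=0$. (One can also observe that $c$ is the unique element of $\bE(y^m)$ for which $f-c$ is summable, since any $\tau_y$-summable element of $\bE(y^m)$ has trace $0$ by (a) and hence, as $\mathrm{char}(\bE)=0$, is itself $0$.)

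The only external input is the Galois-theoretic description established above — that $\bE(y)/\bE(y^m)$ is cyclic of degree $m$ with group $\langle\tau_y\rangle$ — which is what legitimizes identifying $\sum_{i=0}^{m-1}\tau_y^i$ with the field trace (so that it takes values in $\bE(y^m)$) and guarantees $\tau_y^m=\mathrm{id}$ in the telescoping step. Beyond that there is no real obstacle; the main (mild) difficulty is simply verifying the coefficient count in the formula for $\tau_y(g)-g$.
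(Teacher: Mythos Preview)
Your proof is correct. The approaches differ in presentation: the paper simply cites the additive Hilbert~90 for part~(a) and, for part~(b), expands $f$ in the power basis $1,y,\ldots,y^{m-1}$ over $\bE(y^m)$, taking $c=a_0$ and using that $\Tr(y^i)=0$ for $1\le i\le m-1$. You instead give a single explicit antiderivative formula, $g=-\tfrac1m\sum_{i=0}^{m-2}(m-1-i)\tau_y^i(f)$, which yields $\tau_y(g)-g=f-\tfrac1m\Tr(f)$ and handles both parts at once. Your $c=\tfrac1m\Tr(f)$ agrees with the paper's $a_0$ since $\Tr(y^i)=0$ for $i\ge1$. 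Your route is more self-contained (it is, in effect, the standard proof of additive Hilbert~90 specialized to characteristic zero with the trace-normalizing element $1/m$), while the paper's is more modular but relies on an external citation and the explicit basis; both are short and either would be acceptable here.
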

\begin{proof} (a) This is just a restatement of the additive version of Hilbert's Theorem 90 (see \cite[Thm. 6.3, p.~290]{LANG}).\\[0.1in]
(b) Since~$f$ is algebraic over~$\bE(y^m)$ and~$[\bE(y):\bE(y^m)] = m$,
we can write~$f$ as
\begin{equation*}
f = a_{m-1}y^{m-1} + \cdots + a_0, \quad \text{where~$a_0, \ldots, a_{m-1} \in \bE(y^m)$}.
\end{equation*}
Since $\Tr_{\bE(y)/\bE(y^m)}(y^i) = 0$ for $i = 1, \ldots , m-1$,  the assertion in part (a)
implies that $f = \tau_y(g) - g + a_0$ for some~$g\in \bE(y)$
(alternatively, note that for each~$i\in \{1, \ldots, m-1\}$, we have~$y^i = \tau_y(g_i)-g_i$
with~$g_i = \frac{y^i}{q^i-1}$). So~$f-a_0$ is~$\tau_y$-summable in~$\bE(y)$.
For any nonzero element~$c\in \bE(x^m)$, the trace of~$c$ is not zero.
So~$f$ is~$\tau_y$-summable if and only if~$a_0$ is zero.
\QED
\end{proof}

In this case, we see that it is quite easy to verify the~$q$-summability of rational functions in~$\bE(y)$.\\[0.1in]
Now we assume that~$q$ is not a root of unity and return to the assumption that $\bE$ is algebraically closed.
For an element~$\alpha \in \bE,$, we call the subset~$\{\alpha \cdot q^i \mid i \in \bZ\}$ of~$\bE$
the~\emph{$q^\bZ$-orbit} of~$\alpha$
in~$\bE$, denoted by~$[\alpha]_q$. We say $\alpha$ and $\beta$ are $q^\bZ$-equivalent if $\beta \in [\alpha]_q$ and
we write $\alpha\simq \beta$. For a
polynomial~$b\in \bE[y],  b \neq \lambda y^n, \lambda \in \bE, n \in \bN$, the value
\[\max\{i\in \bZ \mid \text{$\exists$  nonzero~$\alpha, \beta \in \bE$ such
that~$\alpha=q^i\cdot \beta$ and~$b(\alpha)=b(\beta)=0$}\}\]
is called the~\emph{$q$-dispersion} of~$b$ with respect to~$x$, denoted by~$\qdisp_x(b)$.
For~$b=\lambda y^n$ with~$\lambda \in \bE$ and~$n\in \bN\setminus\{0\}$,
we define~$\qdisp_y(b)=+\infty$.
The polynomial~$b$ is said to be~\emph{$q$-shift-free} with respect
to~$y$ if~$\qdisp_y(b)=0$.
Let~$f=a/b\in \bE(y)$ be such that~$a, b\in \bE[y]$ and~$\gcd(a, b)=1$.
Over the field~$\bE$, $f$ can be uniquely decomposed into the form
\begin{equation}\label{EQ:qparfrac}
f = c + yp_1 + \frac{p_2}{y^s} +
\sum_{i=1}^m \sum_{j=1}^{n_i} \sum_{\ell=0}^{d_{i,j}} \frac{\alpha_{i,j,\ell}}{(\tau_y^{-\ell}(y)-\beta_i)^{j}},
\end{equation}
where~$c\in \bE$, $p_1, p_2 \in \bE[y]$,  $m, n_i\in \bN$ are nonzero,
$s, d_{i,j}\in \bN$, $\alpha_{i, j, \ell}, \beta_i\in \bE$, and~$\beta_i$'s are nonzero and
in distinct $q^\bZ$-orbits. Motivated by Fact~\ref{FACT:1}, we introduce a $q$-discrete analogue of the
usual residues for rational functions.

\begin{definition}[$q$-discrete residue]\label{DEF:qres}
Let~$f\in \bE(y)$ be of the form~\eqref{EQ:qparfrac}.
The sum $\sum_{\ell=0}^{d_{i,j}}\alpha_{i,j, \ell}$
is called the \emph{$q$-discrete residue} of~$f$ at
the~$q^\bZ$-orbit $[\beta_i]_q$ of multiplicity~$j$ (with respect to~$y$),
denoted by~$\qres_x(f, [\beta_i]_q, j)$. In addition, we call the constant~$c$ the
\emph{$q$-discrete residue} of~$f$ at infinity, denoted by~$\qres_x(f, \infty)$.
\end{definition}
\begin{remark}
One should notice that the definition of $q$-discrete residues in~\cite{ChenSinger2012} is
defined via the decomposition
\begin{equation}\label{EQ:qparfrac2}
f = c + yp_1 + \frac{p_2}{y^s} +
\sum_{i=1}^m \sum_{j=1}^{n_i} \sum_{\ell=0}^{d_{i,j}} \frac{\bar{\alpha}_{i,j,\ell}}{(y-q^\ell \cdot \beta_i)^{j}},
\end{equation}
and~$\qres_x(f, [\beta_i]_q, j) = \sum_{\ell=0}^{d_{i,j}}q^{-\ell \cdot j} \bar{\alpha}_{i,j, \ell}$. But it is easy to see
that~$\alpha_{i, j, \ell} = q^{-\ell \cdot j} \bar{\alpha}_{i,j, \ell}$. Therefore, the two definitions coincide.
This adjustment will allow us treat discrete residues and their~$q$-analogue in a more similar way.
\end{remark}

The following lemma is a~$q$-analogue of Proposition~\ref{PROP:ratsum}.
\begin{prop}[c.f. Prop. 2.10 in~\cite{ChenSinger2012}]\label{PROP:ratqsum}
Let~$f=a/b\in \bE(y)$ be such that~$a, b\in \bE[y]$ and~$\gcd(a, b)=1$.
Then~$f$ is rational $\tau_y$-summable in~$\bE(y)$ if and only if the~$q$-discrete
residues $\qres_y(f, \infty)$
and~$\qres_y(f, [\beta]_q, j)$ are all zero for any $q^\bZ$-orbit~$[\beta]_q$ with~$\beta \neq 0$ and~$b(\beta)=0$
of any multiplicity~$j\in \bN$.
\end{prop}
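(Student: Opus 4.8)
The plan is to first reduce $f$ modulo the $\bE$-subspace $(\tau_y-1)(\bE(y))$ to a canonical normal form whose coefficients are exactly the $q$-discrete residues of $f$, and then to show that a nonzero normal form is never $\tau_y$-summable; this is the $q$-analogue of the argument behind Proposition~\ref{PROP:ratsum}. Starting from the decomposition~\eqref{EQ:qparfrac}, I apply Fact~\ref{FACT:1} with $\phi=\tau_y^{-1}$ to each summand $\alpha_{i,j,\ell}/(\tau_y^{-\ell}(y)-\beta_i)^j$: since $\tau_y^{-1}(u)-u=\tau_y\bigl(-\tau_y^{-1}(u)\bigr)-\bigl(-\tau_y^{-1}(u)\bigr)$ lies in $(\tau_y-1)(\bE(y))$, this summand is congruent to $\alpha_{i,j,\ell}/(y-\beta_i)^j$ modulo $(\tau_y-1)(\bE(y))$, and summing over $\ell$ contributes $\qres_y(f,[\beta_i]_q,j)/(y-\beta_i)^j$. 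Because $q$ is not a root of unity, $q^k\neq1$ for every $k\neq0$, so $y^k=\frac{1}{q^k-1}\bigl(\tau_y(y^k)-y^k\bigr)$; hence the terms $yp_1$ and $p_2/y^s$ in~\eqref{EQ:qparfrac}, being $\bE$-linear combinations of monomials $y^k$ with $k\neq0$, are $\tau_y$-summable. Altogether $f=\tau_y(g_0)-g_0+r$ with
\[
r \;=\; \qres_y(f,\infty)\;+\;\sum_{i=1}^{m}\sum_{j=1}^{n_i}\frac{\qres_y(f,[\beta_i]_q,j)}{(y-\beta_i)^j},
\]
the $\beta_i$ nonzero and in distinct $q^\bZ$-orbits, and $f$ is $\tau_y$-summable if and only if $r$ is. If every $q$-discrete residue of $f$ vanishes then $r=0$ and $f$ is $\tau_y$-summable; this is the trivial direction.

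For the converse, I assume $r=\tau_y(g)-g$ for some $g\in\bE(y)$ and argue that $r=0$. Write $g=L+h$, where $L\in\bE[y,y^{-1}]$ is a Laurent polynomial and $h\in\bE(y)$ is proper with $h(0)$ finite (so the poles of $h$ are nonzero and $h$ vanishes at infinity); this decomposition is unique, and $\tau_y(g)-g=\bigl(\tau_y(L)-L\bigr)+\bigl(\tau_y(h)-h\bigr)$ respects it, since $\tau_y(L)-L\in\bE[y,y^{-1}]$ while $\tau_y(h)-h$ is again proper with only nonzero poles. Comparing with the two corresponding parts of $r$: its Laurent part is the constant $\qres_y(f,\infty)$, whereas writing $L=\sum_k a_ky^k$ gives $\tau_y(L)-L=\sum_k a_k(q^k-1)y^k$, which can be a constant only if $a_k=0$ for all $k\neq0$, in which case the constant is $a_0(q^0-1)=0$. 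Hence $\qres_y(f,\infty)=0$.

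It remains to show $h=0$, which forces the remaining $q$-discrete residues to vanish. Suppose $h\neq0$, and pick a nonzero pole $\delta$ of $h$; let $S\subseteq\bZ$ be the finite nonempty set of $k$ with $q^k\delta$ a pole of $h$ (these points are pairwise distinct since $q$ is not a root of unity), and set $k_{\min}=\min S$, $k_{\max}=\max S$. From $\tau_y(h)(y)=h(qy)$, the poles of $\tau_y(h)$ in the orbit $[\delta]_q$ are exactly the points $q^{k-1}\delta$ with $k\in S$. Thus $q^{k_{\max}}\delta$ is a pole of $h$ (hence of $g$, as $L$ has no nonzero finite poles) but not of $\tau_y(h)$, and $q^{k_{\min}-1}\delta$ is a pole of $\tau_y(h)$ but not of $h$; since neither of these points is $0$, neither is affected by $\tau_y(L)-L$, so both are genuine poles of $r$. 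They lie in the single orbit $[\delta]_q$ and are distinct ($k_{\max}-(k_{\min}-1)\geq1$), so the denominator of $r$ is not $q$-shift-free, contradicting the displayed form of $r$, which has at most one pole per $q^\bZ$-orbit. Therefore $h=0$, all $q$-discrete residues of $f$ vanish, and the proposition follows. The delicate point is exactly this last step: one must verify that the two outermost poles of $\tau_y(g)-g$ inside an orbit genuinely survive --- no cancellation, and they are nonzero --- which is precisely where the hypothesis that $q$ is not a root of unity enters; the reduction to normal form and the easy direction are routine given Fact~\ref{FACT:1}.
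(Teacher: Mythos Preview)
Your proof is correct. The paper does not supply its own argument for this proposition---it simply cites Proposition~2.10 of~\cite{ChenSinger2012}---so there is nothing in the present paper to compare against; your argument is the natural $q$-analogue of the reasoning behind Proposition~\ref{PROP:ratsum}, reducing $f$ to the normal form $r$ via Fact~\ref{FACT:1} and then showing that a nonzero $r$ cannot lie in $(\tau_y-1)(\bE(y))$ by the ``extremal pole in an orbit'' trick. The one place worth a second look is the Laurent/proper splitting $g=L+h$: you implicitly use that this decomposition (with $h$ proper and regular at~$0$) is unique and $\tau_y$-stable, which is true but perhaps deserves one explicit sentence, since it is what lets you match the constant part of $r$ with $\tau_y(L)-L$ and the simple-fraction part with $\tau_y(h)-h$ separately.
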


In terms of $q$-discrete residues, we derive a normal form for a rational function in the
quotient space~$\bE(y)/((\tau_y-1)(\bE(y)))$. Let~$f$ be of the form~\eqref{EQ:qparfrac}.
Then we can decompose it into~$f = \tau_y(g)-g + r$, where~$g, r\in \bE(y)$ and
\[\text{$r= c + \sum_{i=1}^m\sum_{j=1}^{n_i} \frac{\qres_y(f, [\beta_i]_q, j)}{(y-\beta_i)^j}$
\, \, with~$\beta_i$'s being in distinct $q^{\bZ}$-orbits.}\]
The condition above on~$r$ is equivalent to that the rational function~$r-c$ is proper and its denominator is
$q$-shift-free with respect to~$y$.
Such an additive decomposition can be computed
by the algorithms in~\cite{Abramov1975, Abramov1995b}.

\section{Summability problem: the bivariate case}\label{SECT:criteria}
In this section, we will view rational functions in~$\bF(x, y)$
as univariate rational functions in~$y$
over the field~$\overline{\bF(x)}$.
To this end, we need to extend the summability
in~$\bF(x, y)$ to its algebraic closure $\overline{\bF(x, y)}$.
Let~$\phi, \varphi$ be two automorphisms of~$\bF(x, y)$.
Abusing notation, we still let~$\phi, \varphi$ denote the
arbitrary extensions of~$\phi, \varphi$ to~$\overline{\bF(x,y)}$.
An algebraic function~$f \in \overline{\bF(x, y)}$, is said
to be \emph{$(\phi, \varphi)$-summable in~$\overline{\bF(x, y)}$} if there exist~$g, h\in \overline{\bF(x, y)}$
such that~$f = \phi(g)-g + \varphi(h)-h$.
For a rational function~$f\in \bF(x, y)$, we will show that the summability in~$\bF(x, y)$ and
that in~$\overline{\bF(x, y)}$ are equivalent. To this end, we will need the following lemma.
\begin{lemma}\label{lem:sum} Let~$k$ be a field of characteristic zero and
let~$\bark$ be its algebraic closure and let~$\theta :\bark\rightarrow \bark$
be an automorphism such that~$\theta(k) = k$.  Let~$\alpha \in \bark$ and
let~$K, k\subset K \subset \bark$, be a finite normal extension of~$k$
containing~$\alpha$ and~$\theta(\alpha)$. If~${\text Tr}_{K/k}$ denotes
the trace, then for~$\alpha \in K$
\[{\text Tr}_{K/k}(\theta(\alpha)) = \theta({\text Tr}_{K/k}(\alpha)).\]
\end{lemma}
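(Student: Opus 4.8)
The plan is to exploit the standard characterization of the trace as a sum over embeddings into a Galois closure, together with the fact that $\theta$ restricts to an automorphism of $k$, so it permutes the $k$-embeddings of $K$ among themselves. First I would pass to a field $L$ with $K \subseteq L \subseteq \bark$ that is normal over $k$ and, crucially, is stable under $\theta$; for instance one may take $L$ to be the normal closure over $k$ of the compositum $K \cdot \theta(K) \cdot \theta^{-1}(K)$, or even just iterate $\theta$ on $K$ and take the normal closure of the (finite) compositum $\prod_{i\in\bZ}\theta^i(K)$, which stabilizes since each $\theta^i(K)$ has the same degree over $k$. Since $\theta(k)=k$, the map $\theta$ then sends $k$-embeddings $L \hookrightarrow \bark$ to $k$-embeddings $L \hookrightarrow \bark$: if $\iota\colon L \to \bark$ fixes $k$ pointwise, so does $\theta\circ\iota\circ\theta^{-1}$, and $\iota \mapsto \theta\circ\iota\circ\theta^{-1}$ is a bijection on $\mathrm{Gal}(L/k)$ with inverse given by $\theta^{-1}$.

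Next I would write, for $\alpha \in K \subseteq L$,
\[
\Tr_{K/k}(\alpha) = \frac{1}{[L:K]}\,\Tr_{L/k}(\alpha) = \frac{1}{[L:K]}\sum_{\iota \in \mathrm{Gal}(L/k)} \iota(\alpha),
\]
using transitivity of the trace and normality of $L/k$. Applying $\theta$ and using that $\theta$ fixes $k$ (so fixes $[L:K]\in\bZ$) gives
\[
\theta\bigl(\Tr_{K/k}(\alpha)\bigr) = \frac{1}{[L:K]}\sum_{\iota} \theta(\iota(\alpha)) = \frac{1}{[L:K]}\sum_{\iota} (\theta\circ\iota\circ\theta^{-1})\bigl(\theta(\alpha)\bigr).
\]
Since $\iota \mapsto \theta\circ\iota\circ\theta^{-1}$ permutes $\mathrm{Gal}(L/k)$, the right-hand sum equals $\sum_{\iota'} \iota'(\theta(\alpha)) = \Tr_{L/k}(\theta(\alpha))$, and $\theta(\alpha) \in \theta(K)$ lies in $L$ with $[L:\theta(K)] = [L:K]$, so dividing by $[L:K]$ recovers $\Tr_{K/k}(\theta(\alpha))$. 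To conclude I would note that the hypothesis guarantees $\alpha,\theta(\alpha) \in K$, and while the traces $\Tr_{K/k}$ and $\Tr_{\theta(K)/k}$ a priori refer to different subfields, $\theta(K)$ and $K$ are $k$-isomorphic and the identity is symmetric enough that the statement as written — trace of $\theta(\alpha)$ computed in $K$, which contains both $\alpha$ and $\theta(\alpha)$ by assumption — is exactly what the embedding computation yields.

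The main obstacle, and the reason for the careful setup, is bookkeeping about which fields are normal and $\theta$-stable: $K$ itself need not satisfy $\theta(K)=K$, so one cannot directly identify $\mathrm{Gal}(K/k)$ with its image under conjugation by $\theta$. The resolution is to enlarge to a $\theta$-stable normal $L$ and push everything there via transitivity of the trace; once that is in place the permutation argument on embeddings is routine. A minor point to check is that the relevant compositum is a finite extension of $k$ (true since $K/k$ is finite and all $\theta^i(K)$ have the same degree) so that $L$ exists inside $\bark$ and all trace formulas are legitimate. I would also remark that an entirely parallel (and shorter) argument works if one is willing to assume $\theta(K)=K$ from the start, but the lemma is stated without that hypothesis, so the compositum trick is needed.
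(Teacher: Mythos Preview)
Your argument has a real gap in the construction of the $\theta$-stable finite normal extension $L$. The claim that $\prod_{i\in\bZ}\theta^i(K)$ is finite over $k$ ``since each $\theta^i(K)$ has the same degree over $k$'' is false: equal degrees do not force a compositum to stabilize. Take $k=\bQ(t)$, let $\theta$ be any extension to $\bark$ of the automorphism $t\mapsto t+1$ of $k$, and set $\alpha=\sqrt{t}$, $K=k(\sqrt{t},\sqrt{t+1})$ (so that $\alpha,\theta(\alpha)\in K$ as required). Then $\theta^i(K)=k(\sqrt{t+i},\sqrt{t+i+1})$, and since $t,t+1,t+2,\ldots$ are independent in $k^{*}/(k^{*})^{2}$, the compositum $\prod_i\theta^i(K)$ is infinite over $k$. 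In fact no finite $\theta$-stable $L\supseteq K$ exists here, because any such $L$ would have to contain every $\theta^i(K)$. Your alternative suggestion, the normal closure of $K\cdot\theta(K)\cdot\theta^{-1}(K)$, is finite but not $\theta$-stable. Without $\theta(L)=L$ the conjugation $\iota\mapsto\theta\iota\theta^{-1}$ does not carry $\mathrm{Gal}(L/k)$ to itself, and the permutation step collapses.

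The paper avoids this entirely with a much shorter argument via the minimal polynomial: if $P(z)=z^m+p_1z^{m-1}+\cdots+p_m\in k[z]$ is the minimal polynomial of $\alpha$, then $\Tr_{K/k}(\alpha)=-np_1$ with $n=[K:k(\alpha)]$; since $\theta|_k$ is an automorphism of $k$, the minimal polynomial of $\theta(\alpha)$ is $P^{\theta}(z)=z^m+\theta(p_1)z^{m-1}+\cdots$, so $[K:k(\theta(\alpha))]=n$ and $\Tr_{K/k}(\theta(\alpha))=-n\theta(p_1)=\theta(-np_1)$. No auxiliary extension is needed. Your embedding idea can be rescued along the same lines---$\theta$ maps the set of $k$-conjugates of $\alpha$ bijectively to those of $\theta(\alpha)$---but once phrased that way it is essentially the minimal-polynomial proof.
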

\begin{proof}
Let $P(z) = z^m + p_1z^{m-1} + \ldots + p_m \in k[z]$
be the minimum polynomial of~$\alpha$ over $k$.
Note that $\text{Tr}_{K/k}(\alpha) = -n p_1$,
where $n = [K:k(\alpha)]$. Furthermore note that the minimum polynomial
of $\theta(\alpha)$ is~$P^\theta(z) = z^m + \theta(p_1)z^{m-1} + \ldots + \theta(p_m)$.
Therefore ${\text Tr}_{K/k}(\theta(\alpha)) = -n \theta(p_1) = \theta(-n p_1) = \theta({\text Tr}_{K/k}(\alpha))$.
\QED
\end{proof}

\begin{theorem}\label{THM:sum2}
Let~$f\in \bF(x, y)$ and assume that~$\phi, \varphi$ be two $\bF$-automorphisms of~$\bF(x, y)$.
Then $f$ is $(\phi, \varphi)$-summable in~$\overline{\bF(x,y)}$ if and only if
$f$ is $(\phi, \varphi)$-summable in~$\bF(x, y)$.
\end{theorem}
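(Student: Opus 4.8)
The plan is to prove the nontrivial direction: if $f \in \bF(x,y)$ is $(\phi,\varphi)$-summable in $\overline{\bF(x,y)}$, then it is already summable in $\bF(x,y)$. One direction is trivial since $\bF(x,y) \subseteq \overline{\bF(x,y)}$. For the hard direction, suppose $f = \phi(g) - g + \varphi(h) - h$ with $g, h \in \overline{\bF(x,y)}$. First I would choose a finite normal extension $K$ of $k := \bF(x,y)$, sitting inside $\bar k := \overline{\bF(x,y)}$, that contains $g$, $h$, and their images $\phi(g), \varphi(h)$, and is stable under $\phi$ and $\varphi$ in the sense that $\phi(K) = K = \varphi(K)$ (such a $K$ exists: start with any finite normal $K_0 \supseteq k$ containing $g,h$, then enlarge by the finitely many distinct fields $\phi^i(K_0), \varphi^j(K_0)$ — using that $\phi, \varphi$ have finite order on the lattice of such subfields, or simply that the compositum of a finite $\phi$-orbit of finite normal extensions is again finite normal and $\phi$-stable).

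The key step is to apply the trace map $\Tr_{K/k} : K \to k$. Applying it to $f = \phi(g) - g + \varphi(h) - h$ and using $\bF$-linearity of the trace together with Lemma~\ref{lem:sum} (which gives $\Tr_{K/k}(\phi(g)) = \phi(\Tr_{K/k}(g))$ and similarly for $\varphi$ — here one needs $\phi(k) = k$, $\varphi(k) = k$, which holds since $\phi, \varphi$ are $\bF$-automorphisms of $\bF(x,y)$, and one needs $K$ normal and $\phi$-, $\varphi$-stable so the lemma applies), one obtains
\[
\Tr_{K/k}(f) = \phi(\Tr_{K/k}(g)) - \Tr_{K/k}(g) + \varphi(\Tr_{K/k}(h)) - \Tr_{K/k}(h).
\]
Now $f \in k$, so $\Tr_{K/k}(f) = [K:k]\, f$. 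Setting $G := \frac{1}{[K:k]}\Tr_{K/k}(g) \in k$ and $H := \frac{1}{[K:k]}\Tr_{K/k}(h) \in k$ (division by $[K:k]$ is legitimate since $\mathrm{char}\,\bF = 0$), dividing through yields $f = \phi(G) - G + \varphi(H) - H$ with $G, H \in \bF(x,y)$, which is exactly $(\phi,\varphi)$-summability in $\bF(x,y)$.

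The main obstacle is the careful setup of the field $K$: one must ensure simultaneously that $K/k$ is finite and normal (so the trace is well-behaved and Lemma~\ref{lem:sum} applies), that $K$ contains $g, h, \phi(g), \varphi(h)$, and that $\phi(K) = K$ and $\varphi(K) = K$ so that $\Tr_{K/k} \circ \phi = \phi \circ \Tr_{K/k}$ as maps $K \to k$. The cleanest way is to observe that an $\bF$-automorphism $\phi$ of $\bF(x,y)$ permutes the (finitely many, once a degree bound is fixed) finite normal subextensions of $\overline{\bF(x,y)}/\bF(x,y)$ of a given degree, so the compositum of the $\phi$- and $\varphi$-orbit of a single such field containing $g$ and $h$ is again finite, normal over $k$, and stable under both $\phi$ and $\varphi$. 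Once $K$ is in hand, the remaining computation is the short linear-algebra argument above. Note this argument is purely field-theoretic and does not use the shift/$q$-shift structure at all — it works for arbitrary $\bF$-automorphisms $\phi, \varphi$, as the statement claims.
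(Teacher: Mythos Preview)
Your proposal is correct and follows exactly the paper's approach: apply the trace $\Tr_{K/k}$ to the equation $f = \phi(g)-g+\varphi(h)-h$, use Lemma~\ref{lem:sum} to commute trace past $\phi$ and $\varphi$, and divide by $[K:k]$.

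One remark: you impose the extra condition $\phi(K)=K=\varphi(K)$ and call its verification ``the main obstacle,'' but this condition is not needed and your justification for it is incorrect. Lemma~\ref{lem:sum} only requires that $K$ be a finite normal extension of $k$ containing both $\alpha$ and $\theta(\alpha)$; it does \emph{not} require $\theta(K)=K$ (the proof compares $\Tr_{K/k}(\alpha)$ and $\Tr_{K/k}(\theta(\alpha))$ via the minimal polynomials, and $[K:k(\alpha)]=[K:k(\theta(\alpha))]$ since both equal $[K:k]/\deg P$). So the paper simply takes any finite normal $K/k$ containing $g,h,\phi(g),\varphi(h)$ and is done. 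Your argument that such a stable $K$ exists is in fact wrong: the $\phi$-orbit of a finite normal extension is typically infinite (e.g.\ with $\phi=\sigma_x$ the fields $k(\sqrt{x+n})$, $n\in\bZ$, are pairwise distinct), and there are infinitely many normal subextensions of any fixed degree $>1$. Fortunately this does not affect the proof, since the stability hypothesis can simply be dropped.
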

\begin{proof}
The sufficiency is obvious. Conversely, assume that~$f$ is $(\phi, \varphi)$-summable in~$\overline{\bF(x,y)}$,
that is there exist~$g, h\in \overline{\bF(x, y)}$ such that
\[f = \phi(g) - g + \varphi(h) - h.\]
Let~$k=\bF(x, y)$ and~$\bar k = \overline{\bF(x, y)}$.
Let $K, k \subset K \subset \bark$ be a finite normal extension of~$k$
containing~$g, h, \phi(g), \varphi(h)$. Applying Lemma~\ref{lem:sum} to $ \theta = \phi, \alpha = g$
yields
\[{\text Tr}_{K/k}(\phi(g)) = \phi({\text Tr}_{K/k}(g)).\]
Similarly we deduce that ${\text Tr}_{K/k}(\varphi(h)) = \varphi({\text Tr}_{K/k}(h))$. Therefore,
for the integer~$N = [K:k]$, we have
\begin{eqnarray*} N f & = & {\text Tr}_{K/k}(\phi(g) - g + \varphi(h) - h)\\
& = & {\text Tr}_{K/k}(\phi(g)) - {\text Tr}_{K/k}(g) + {\text Tr}_{K/k}(\varphi(h)) - {\text Tr}_{K/k}(g)\\
& = &\phi({\text Tr}_{K/k}(g)) -  {\text Tr}_{K/k}(g) +\varphi( {\text Tr}_{K/k}(h))-{\text Tr}_{K/k}(g).
\end{eqnarray*}
Since ${\text Tr}_{K/k}(g)$ and~${\text Tr}_{K/k}(h)$ are in~$k$, we have shown that~$f$ is $(\phi, \varphi)$-summable in $k$.
\QED
\end{proof}
The following fact, together with Fact~\ref{FACT:1}, will be used to simplify the summability problem.
\begin{fact}\label{FACT:2}
Let~$\phi$ be an automorphism of~$\overline{\bF(x)}(y)$ such that~$\phi(y)=y$ and let~$\alpha, \beta \in \overline{\bF(x)}$.
Then for all~$m, n\in \bN$ we have
\begin{equation*}
\frac{\phi^n(\alpha)}{(y- \phi^n(\beta))^m} = \phi(g)-g + \frac{\alpha}{(y-\beta)^m}, \quad
\text{where~$g=\sum_{j=0}^{n-1} \frac{\phi^{j}(\alpha)}{(y-\phi^j(\beta))^m}$}.
\end{equation*}
\end{fact}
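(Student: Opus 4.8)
The plan is to recognize Fact~\ref{FACT:2} as a telescoping identity, entirely parallel to Fact~\ref{FACT:1}, where the one new ingredient is the hypothesis $\phi(y) = y$. First I would observe that because $\phi$ fixes $y$, it restricts to an automorphism of $\overline{\bF(x)}$ and acts on any rational function of $y$ over $\overline{\bF(x)}$ simply by applying $\phi$ to the coefficients. In particular, since $\phi$ is a ring homomorphism and $\phi(y - \phi^{j}(\beta)) = y - \phi^{j+1}(\beta)$, we get for every $j \in \bN$
\[
\phi\!\left(\frac{\phi^{j}(\alpha)}{(y-\phi^{j}(\beta))^{m}}\right) = \frac{\phi^{j+1}(\alpha)}{(y-\phi^{j+1}(\beta))^{m}}.
\]

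Next I would apply $\phi$ term by term to $g = \sum_{j=0}^{n-1} \phi^{j}(\alpha)/(y-\phi^{j}(\beta))^{m}$ and subtract $g$. By the displayed relation, the $j$-th summand of $\phi(g)$ is exactly the $(j{+}1)$-st summand of $g$, so the difference collapses:
\[
\phi(g) - g = \sum_{j=1}^{n} \frac{\phi^{j}(\alpha)}{(y-\phi^{j}(\beta))^{m}} - \sum_{j=0}^{n-1} \frac{\phi^{j}(\alpha)}{(y-\phi^{j}(\beta))^{m}} = \frac{\phi^{n}(\alpha)}{(y-\phi^{n}(\beta))^{m}} - \frac{\alpha}{(y-\beta)^{m}}.
\]
Rearranging this equality gives the asserted formula; the boundary case $n = 0$, where the sum defining $g$ is empty and $g = 0$, holds trivially since then both sides equal $\alpha/(y-\beta)^{m}$.

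There is essentially no obstacle here: once the coefficient-wise action of $\phi$ is noted, the statement reduces to a formal telescoping computation. The only point deserving a sentence of care is the justification that $\phi$ commutes with substitution into $1/(y-\,\cdot\,)^{m}$, which is immediate from $\phi(y)=y$ and multiplicativity of $\phi$, so no genuine difficulty is expected.
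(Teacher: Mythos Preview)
Your proof is correct: the telescoping computation is exactly right, and the only nontrivial point—that $\phi$ applied to $\phi^{j}(\alpha)/(y-\phi^{j}(\beta))^{m}$ yields $\phi^{j+1}(\alpha)/(y-\phi^{j+1}(\beta))^{m}$—is properly justified using $\phi(y)=y$ and the ring-homomorphism property. The paper states Fact~\ref{FACT:2} without proof (as it does Fact~\ref{FACT:1}), so your argument is precisely the routine telescoping verification the authors leave to the reader.
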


\subsection{The shift case}
In this case, we consider the problem of deciding whether a given rational function~$f\in \bF(x, y)$
is equal to~$\si_x(g)-g + \si_y(h) - h$ for some~$g, h\in \bF(x, y)$. By Theorem~\ref{THM:sum2}, this problem is equivalent to that of
deciding whether~$f$ is~$(\si_x, \si_y)$-summable in~$\overline{\bF(x)}(y)$.

\begin{lemma}\label{LM:dsf}
Let~$f$ be a rational function in~$\bF(x, y)$. Then~$f$ can be decomposed into~$f = \si_x(g) - g + \si_y(h) - h + r$,
where~$g, h \in \overline{\bF(x)}(y)$ and~$r$ is of the form
\begin{equation} \label{EQ:r}
r = \sum_{i=1}^m \sum_{j=1}^{n_i} \frac{\alpha_{i, j}}{(y-\beta_i)^j}
\end{equation}
with~$\alpha_{i, j}, \beta_i\in \overline{\bF(x)}$, $\alpha_{i, j}\neq 0$, and for all~$i, i^\prime$ with~$1\leq i < i^\prime \leq m$
\begin{equation}\label{EQ:cond}
\beta_i-\si_x^n(\beta_{i'})\notin \bZ \quad \text{ for any~$n\in \bZ$.}
\end{equation}
Moreover, the rational function~$f$ is $(\si_x, \si_y)$-summable in~$\overline{\bF(x)}(y)$ if and only if
the function~$r\in \overline{\bF(x)}(y)$ is~$(\si_x, \si_y)$-summable in~$\overline{\bF(x)}(y)$.
\end{lemma}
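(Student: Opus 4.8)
The plan is to obtain the decomposition in two stages. In the first stage I use $\si_y$ to discard everything except a proper rational function in $y$ whose denominator is $\si_y$-shift-free; in the second stage I use $\si_x$ (together with an auxiliary $\si_y$-adjustment) to coalesce those poles that lie in one $\si_x$-orbit up to an integer translation, after which the surviving poles automatically satisfy~\eqref{EQ:cond}. Concretely, first I would regard $f$ as an element of~$\overline{\bF(x)}(y)$ and apply the $\si_y$-normal form recorded after Proposition~\ref{PROP:ratsum} (taking $\bE=\overline{\bF(x)}$): this gives $f=\si_y(h_0)-h_0+r_0$ with $h_0\in\overline{\bF(x)}(y)$ and
\[
r_0=\sum_{i=1}^{M}\sum_{j}\frac{\gamma_{i,j}}{(y-\beta_i)^j},\qquad \gamma_{i,j},\beta_i\in\overline{\bF(x)},\ \gamma_{i,j}\neq 0,
\]
the $\beta_i$ lying in pairwise distinct $\bZ$-orbits. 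Since $\si_y(h_0)-h_0$ is $(\si_x,\si_y)$-summable, only $r_0$ needs to be reshaped.

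For the second stage I would define $i\sim i'$ on the index set by requiring $\beta_i-\si_x^{n}(\beta_{i'})\in\bZ$ for some $n\in\bZ$; using that $\si_x$ fixes $\bZ$ one checks this is an equivalence relation, and I pick a representative index $i_0$ in each class. For $i$ in the class of $i_0$ write $\beta_i=\si_x^{n_i}(\beta_{i_0})+k_i$ with $n_i,k_i\in\bZ$. Then Fact~\ref{FACT:1} applied with $\phi=\si_y$ eliminates $k_i$ modulo $(\si_y-1)(\overline{\bF(x)}(y))$, and Fact~\ref{FACT:2} applied with $\phi=\si_x$ replaces $\si_x^{n_i}(\beta_{i_0})$ by $\beta_{i_0}$ (at the cost of replacing $\gamma_{i,j}$ by $\si_x^{-n_i}(\gamma_{i,j})$) modulo $(\si_x-1)(\overline{\bF(x)}(y))$; both Facts, stated for nonnegative exponents, extend to negative ones by applying them to $\si_y^{-1}$, resp.\ $\si_x^{-1}$, using $(\psi^{-1}-1)(\overline{\bF(x)}(y))=(\psi-1)(\overline{\bF(x)}(y))$ for any automorphism $\psi$. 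Summing over each class and over $j$ and dropping terms with zero numerator yields
\[
r_0=\si_x(g_1)-g_1+\si_y(h_1)-h_1+r,\qquad r=\sum_{i_0}\sum_{j}\frac{\alpha_{i_0,j}}{(y-\beta_{i_0})^j},
\]
with $g_1,h_1\in\overline{\bF(x)}(y)$, where $\alpha_{i_0,j}=\sum_{i\sim i_0}\si_x^{-n_i}(\gamma_{i,j})$ and the sum runs over the pairs $(i_0,j)$ with $\alpha_{i_0,j}\neq 0$; since the $\beta_{i_0}$ represent distinct $\sim$-classes they satisfy~\eqref{EQ:cond}. Combining the coboundary terms from the two stages (a sum of terms $\si_x(g)-g$ is again of that form, likewise for $\si_y$) gives $f=\si_x(g)-g+\si_y(h)-h+r$ with $g,h\in\overline{\bF(x)}(y)$ and $r$ as in~\eqref{EQ:r}. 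Finally, the $(\si_x,\si_y)$-summable elements of $\overline{\bF(x)}(y)$ form an additive subgroup $S$, and $f-r=\si_x(g)-g+\si_y(h)-h\in S$, so $f\in S$ if and only if $r\in S$, which is the last assertion.

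The argument is essentially bookkeeping: a two-variable version of the univariate normal-form computation, with Facts~\ref{FACT:1} and~\ref{FACT:2} playing the roles of telescoping in $y$ and in $x$ respectively. The points that need care are checking that $\sim$ is genuinely an equivalence relation, making sure the coalescing respects the pole order $j$ (each value of $j$ is handled independently, which is exactly why it goes through), and the sign cases $n_i,k_i<0$ in the applications of the two Facts. That last point is the one I would expect to be most error-prone, and it is where the identity $(\psi^{-1}-1)(\overline{\bF(x)}(y))=(\psi-1)(\overline{\bF(x)}(y))$ is the key observation.
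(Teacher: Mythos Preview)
Your proposal is correct and follows essentially the same approach as the paper: first apply the $\si_y$-normal form from Section~\ref{SUBSECT:ushift} to reduce to a proper fraction with $\bZ$-inequivalent poles, then use Facts~\ref{FACT:1} and~\ref{FACT:2} to merge terms whose $\beta_i$ are related by a $\si_x$-shift plus an integer. The only difference is presentational---the paper eliminates one offending pair at a time and iterates, whereas you group indices by the equivalence relation $\sim$ and coalesce each class in a single step---but the underlying argument is identical.
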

\begin{proof}
Let~$\bE = \overline{\bF(x)}$. According to the discussion in~Section~\ref{SUBSECT:ushift}, there exist~$\bar g, \bar{r}\in \bE(y)$
such that~$f = \si_y(\bar{g}) - \bar{g} + \bar{r}$, where~$\bar{r}$ is of the form
\[\bar{r}=\sum_{i=1}^m\sum_{j=1}^{n_i} \frac{\bar{\alpha}_{ij}}{(y-\beta_i)^j}
\]
with~$\bar{\alpha}_{ij}, \beta_i\in \overline{\bE}$
and~$\beta_i$'s being in distinct $\bZ$-orbits. Assume that for some~$i, i^\prime$ with~$1\leq i < i^\prime \leq m$, we
have~$\beta_i-\si_x^t(\beta_{i'}) = s \in \bZ$. Since~$\beta_i$ and~$\beta_{i^\prime}$ are in distinct $\bZ$-orbits, we have~$t\neq 0$.
By Facts~\ref{FACT:1} and~\ref{FACT:2}, there exist~$g_{i, j}, h_{i, j}\in \bE(y)$ such that
\[\frac{\bar \alpha_{i, j}}{(y-\beta_i)^j} - \frac{\bar \alpha_{i', j}}{(y-\beta_{i'})^j} = \si_x(g_{i, j})-g_{i, j} + \si_y(h_{i, j}) -h_{i, j} +
\frac{\si_x^{-t}(\bar \alpha_{i, j})-\bar \alpha_{i', j}}{(y-\beta_{i'})^j}.\]
This allows us to eliminate a term and we can repeat this process until the~$\beta_i$'s satisfy the condition~\eqref{EQ:cond}.
The remaining equivalence is obvious.
\QED
\end{proof}

\begin{lemma}\label{LM:simplefrac}
Let~$\alpha, \beta\in \overline{\bF(x)}$. If~$\beta=\frac{s}{t} x + c$ with~$s\in \bZ$, $t\in \bN\setminus \{0\}$ and~$c\in {\bF}$
and~$\alpha = \si_x^t(\gamma)-\gamma$ for some~$\gamma\in  \overline{\bF(x)}$, then the fraction~$\frac{\alpha}{(y-\beta)^j}$ is
$(\si_x, \si_y)$-summable in~$\overline{\bF(x)}(y)$.
\end{lemma}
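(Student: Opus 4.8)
The plan is to reduce the fraction $\alpha/(y-\beta)^j$ to something manifestly $(\si_x,\si_y)$-summable by using the hypothesis $\beta = \frac{s}{t}x + c$ to trade a $\si_x$-shift for a $\si_y$-shift. The key observation is that $\si_x^t(\beta) = \frac{s}{t}(x+t) + c = \beta + s = \si_y^s(y)\big|_{\text{applied to the pole}}$; more precisely, $\si_x^t$ moves the pole $\beta$ to $\beta+s$, exactly as $\si_y^{-s}$ would, so applying $\si_x^t$ and $\si_y^s$ in tandem fixes the fraction up to a telescoping correction. So the first step is to write $\gamma/(y-\beta)^j$ and compute $\si_x^t\!\big(\gamma/(y-\beta)^j\big) = \si_x^t(\gamma)/(y-\beta-s)^j$, then apply $\si_y^s$ to bring the denominator back to $(y-\beta)^j$: $\si_y^s\si_x^t\!\big(\gamma/(y-\beta)^j\big) = \si_x^t(\gamma)/(y-\beta)^j$.

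Second, I would assemble the difference $\si_x^t\si_y^s(G) - G$ with $G = \gamma/(y-\beta)^j$, which equals $\big(\si_x^t(\gamma)-\gamma\big)/(y-\beta)^j = \alpha/(y-\beta)^j$ by the hypothesis $\alpha = \si_x^t(\gamma)-\gamma$ (here I use that $\si_x^t\si_y^s(G) = \si_y^s\si_x^t(G)$ since $\si_x$ and $\si_y$ commute, and that $\si_x^t$ fixes $y-\beta$'s $y$-part while shifting $\beta$). Thus $\alpha/(y-\beta)^j = \si_x^t\si_y^s(G) - G$.

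Third, I would invoke the standard telescoping identity (the analogue of Fact~\ref{FACT:1} for the composite automorphism, or rather two applications of it): for any automorphism $\psi$ and any $G$, $\psi^N(G) - G = \psi(\tilde G) - \tilde G$ with $\tilde G = \sum_{i=0}^{N-1}\psi^i(G)$. Applying this first to $\psi = \si_x$ (with $N=t$) writes $\si_x^t(H) - H$ as $\si_x(\cdot)-(\cdot)$ for $H = \si_y^s(G)$, and applying it to $\psi = \si_y$ (with $N = s$, or $N=-s$ with the obvious sign adjustment when $s<0$) writes $\si_y^s(G) - G$ as $\si_y(\cdot)-(\cdot)$. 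Combining, $\alpha/(y-\beta)^j = \si_x^t\si_y^s(G) - G = \big(\si_x^t\si_y^s(G) - \si_y^s(G)\big) + \big(\si_y^s(G) - G\big) = \si_x(g) - g + \si_y(h) - h$ for explicit $g,h \in \overline{\bF(x)}(y)$, which is exactly $(\si_x,\si_y)$-summability.

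The only point requiring a little care — and the place I would expect a careful writer to slow down — is the bookkeeping when $s < 0$ (or $s = 0$), where the sum in the telescoping identity runs over negative indices and signs flip; but this is the same routine manipulation already used in Fact~\ref{FACT:1} and Fact~\ref{FACT:2}, so no genuine obstacle arises. One should also note explicitly that $\gamma \in \overline{\bF(x)}$ guarantees $G \in \overline{\bF(x)}(y)$, so all intermediate functions live in the right field.
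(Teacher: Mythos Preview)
Your argument is correct and takes a slightly different, more elementary route than the paper. Both proofs rest on the same observation that $\si_x^t(\beta) = \beta + s$, but they diverge in how the $y$-part is handled. The paper sets $g = \sum_{\ell=0}^{t-1}\si_x^\ell(\gamma)/(y-\si_x^\ell(\beta))^j$, computes the remainder $r = \alpha/(y-\beta)^j - (\si_x(g)-g)$, observes that the discrete residue of $r$ at $[\beta]$ vanishes because $\alpha+\gamma = \si_x^t(\gamma)$ and the two poles $\beta,\ \beta+s$ lie in the same $\bZ$-orbit, and then invokes Proposition~\ref{PROP:ratsum} to conclude $r = \si_y(h)-h$ for some (not explicitly given) $h$. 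You instead write $\alpha/(y-\beta)^j = \si_x^t\si_y^s(G) - G$ with $G = \gamma/(y-\beta)^j$ and split this into two explicit telescoping sums, bypassing the residue criterion entirely. The paper's route showcases the residue machinery; yours is self-contained and produces both $g$ and $h$ explicitly. Your remark about the sign bookkeeping when $s\le 0$ is the only place needing care, and it is indeed routine.
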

\begin{proof}
Let
\[g = \sum_{\ell=0}^{t-1} \frac{\si_x^\ell(\gamma)}{(y-\si_x^\ell(\beta))^j}.\]
Then
\begin{align*}
r = \frac{\alpha}{(y-\beta)^j}-(\si_x(g)-g) & = \frac{\alpha}{(y-\beta)^j} - \left(\frac{\si_x^{t}(\gamma)}{(y-\si_x^{t}(\beta))^j} - \frac{\gamma}{(y-\beta)^j}\right)
\\
   & =\frac{\alpha + \gamma}{(y-\beta_j)^j} - \frac{\si_x^{t}(\gamma)}{(y-\si_x^{t}(\beta))^j}.
\end{align*}
Note that~$\si_x^{t}(\beta) - \beta= s\in \bZ$. Since~$\alpha = \si_x^{t}(\gamma) -\gamma$, we have the discrete
residue of~$r$ at~$\beta$ of multiplicity~$j$ is zero. Then Proposition~\ref{PROP:ratsum} implies that
there exists~$h\in \overline{\bF(x)}(y)$ such that
\[\frac{\alpha}{(y-\beta)^j} = \si_x(g)-g + \si_y(h) - h,\]
which completes the proof. \QED
\end{proof}

We recall a lemma from~\cite{ChenSinger2012}.
\begin{lemma}[c.f. Lemma 3.7 in~\cite{ChenSinger2012}]\label{LM:intlin}
Let~$\alpha(x)$ be an element in the algebraic closure of~$\bF(x)$.
If there exists a nonzero~$n\in \bZ$ such that~$\si_x^n(\alpha)-\alpha = m$ for some~$m\in \bZ$,
then~$\alpha(x)=\frac{m}{n} x + c$ for some~$c\in \bF$.
\end{lemma}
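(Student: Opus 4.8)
The plan is to reduce everything to a statement about the minimal polynomial of $\alpha$ over $\bF(x)$ and then exploit the hypothesis $\si_x^n(\alpha)-\alpha=m$ to pin down that polynomial. First I would let $P(z)=z^d+c_{d-1}(x)z^{d-1}+\cdots+c_0(x)\in\bF(x)[z]$ be the minimal polynomial of $\alpha$ over $\bF(x)$, and consider the roots $\alpha=\alpha_1,\dots,\alpha_d$ of $P$ in $\acf$. The key observation is that applying $\si_x^n$ to the equation $P(\alpha)=0$ shows that $\si_x^n(\alpha)$ is a root of $P^{\si_x^n}$, the polynomial obtained by shifting the coefficients; but I want to relate this to $P$ itself. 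The cleaner route is: since $\si_x^n(\alpha)=\alpha+m$ with $m\in\bZ$, the element $\alpha$ is also a root of $P(z-m)$ after applying $\si_x^{-n}$ appropriately — more precisely, $\si_x^{-n}$ applied to $P$ is again the minimal polynomial of $\si_x^{-n}(\alpha)=\alpha-m$, so $P^{\si_x^{-n}}(z)=P(z+m)$ by uniqueness of minimal polynomials (both are monic of degree $d$ with root $\alpha-m$, hence equal). Iterating, $P^{\si_x^{-kn}}(z)=P(z+km)$ for all $k\in\bZ$.

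Next I would extract information from $P^{\si_x^{-kn}}(z)=P(z+km)$ by comparing coefficients. Looking at the coefficient of $z^{d-1}$: the left side has coefficient $\si_x^{-kn}(c_{d-1}(x))=c_{d-1}(x-kn)$, while the right side (expanding $P(z+km)$) has coefficient $c_{d-1}(x)+dkm$. Hence $c_{d-1}(x-kn)=c_{d-1}(x)+dkm$ for all $k\in\bZ$. Setting $k=1$ gives $c_{d-1}(x-n)-c_{d-1}(x)=dm$, a constant; since $c_{d-1}$ is a rational function of $x$ whose shift by $n$ changes it by a constant, $c_{d-1}$ must be a polynomial of degree $\le 1$, say $c_{d-1}(x)=ux+v$ with $u,v\in\bF$, and then $-un=dm$, so $u=-dm/n$. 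Therefore $c_{d-1}(x)=-\frac{dm}{n}x+v$. On the other hand, $c_{d-1}(x)=-(\alpha_1+\cdots+\alpha_d)$, the negative of the sum of the conjugates.

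The final step is to show all conjugates equal $\alpha$, i.e.\ $d=1$. Here I would argue that each conjugate $\alpha_i$ satisfies the same kind of relation: since the Galois group permutes the $\alpha_i$ and commutes suitably with $\si_x$ on $\acf$ (as $\si_x$ fixes $\bF(x)$ up to the extension, one can choose compatible extensions, or work with the splitting field as in Lemma~\ref{lem:sum}), each $\alpha_i$ satisfies $\si_x^{n'}(\alpha_i)-\alpha_i=m'$ for suitable integers — or more directly, one shows $\si_x^n(\alpha_i)$ is again a root of $P$ and the map $\alpha_i\mapsto\si_x^n(\alpha_i)-m$ permutes the roots, so some power fixes each root, forcing $\si_x^{Nn}(\alpha_i)=\alpha_i+Nm$ for a common $N$. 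Then the leading-coefficient argument above, applied to the minimal polynomial of each $\alpha_i$ over $\bF(x)$ (which is $P$ again), shows every $\alpha_i$ has the form $\frac{m}{n}x+c_i$ with $c_i\in\bF$ (using the degree-$1$ conclusion and the ratio $m/n$ forced by the shift relation, plus the earlier companion result Lemma~\ref{LM:intlin}-type reasoning in the base field). The main obstacle I anticipate is the bookkeeping around extending $\si_x$ to $\acf$ and ensuring the conjugates genuinely satisfy an integer-shift relation; once that is in hand, comparing the $z^{d-1}$ coefficients and invoking that a rational function whose $n$-shift differs from it by a constant must be linear (a fact provable by looking at poles and degrees) finishes the argument, and in fact forces $d=1$ because $\frac{m}{n}x+c$ already has $\bF(x)$-degree one.
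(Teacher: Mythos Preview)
The paper does not actually prove this lemma; it is quoted verbatim from \cite{ChenSinger2012}, so there is no in-paper argument to compare against. I will therefore comment only on the soundness of your proposal.

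Your first two steps are fine: from $\si_x^{n}(\alpha)=\alpha+m$ you correctly deduce that the minimal polynomial $P$ of $\alpha$ over $\bF(x)$ satisfies $P^{\si_x^{-n}}(z)=P(z+m)$, and the comparison of the $z^{d-1}$-coefficients is valid. The problem is the last paragraph. The ``leading-coefficient argument'' only determines $c_{d-1}=-\sum_i\alpha_i$; it says nothing about any individual conjugate $\alpha_i$. You then try to close the gap by observing that each $\alpha_i$ satisfies $\si_x^{Nn}(\alpha_i)=\alpha_i+Nm$ and invoking ``Lemma~\ref{LM:intlin}-type reasoning'' to conclude $\alpha_i=\tfrac{m}{n}x+c_i$. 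That is precisely the statement you are proving, applied to $\alpha_i$ with the pair $(Nn,Nm)$, so the argument is circular. There is no induction parameter that has decreased: the minimal polynomial of each $\alpha_i$ is the same $P$, of the same degree $d$.

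The repair is short and uses what you have already established. Set $\beta=\alpha-\tfrac{m}{n}x$. Then $\si_x^{\,n}(\beta)=\beta$, so the minimal polynomial $Q\in\bF(x)[z]$ of $\beta$ satisfies $Q^{\si_x^{\,n}}=Q$; equivalently, every coefficient of $Q$ is an $n$-periodic element of $\bF(x)$ and hence lies in $\bF$. Thus $Q\in\bF[z]$ is irreducible over $\bF(x)$, hence over $\bF$, and since $\bF$ is algebraically closed this forces $\deg Q=1$, i.e.\ $\beta\in\bF$. (Equivalently, your identity $P^{\si_x^{-n}}(z)=P(z+m)$ says exactly that $\tilde P(z):=P\bigl(z+\tfrac{m}{n}x\bigr)$ is $\si_x^{\,n}$-invariant, hence $\tilde P\in\bF[z]$ and therefore linear.) This replaces your final paragraph and makes the trace/coefficient computation for $c_{d-1}$ unnecessary.
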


Note that the shift operators~$\si_x$ and~$\si_y$ preserve the multiplicities of irreducible factors in the denominators of
rational functions.
Therefore the rational function~$r$ in~\eqref{EQ:r} is $(\si_x, \si_y)$-summable in~$\overline{\bF(x)}(y)$ if and only if for each~$j$,
the rational function
\begin{equation}\label{EQ:rj}
r_j = \sum_{i=1}^m \frac{\alpha_{ij}}{(y-\beta_i)^j}
\end{equation}
is $(\si_x, \si_y)$-summable in~$\overline{\bF(x)}(y)$.

\begin{theorem}\label{THM:ratsum}
Let~$f\in \overline{\bF(x)}(y)$ be of the form~\eqref{EQ:rj} with~$\alpha_{i, j}, \beta_i$ in~$\overline{\bF(x)}$,
$\alpha_{i, j} \neq 0$, and the~$\beta_i$'s satisfying
the condition~\eqref{EQ:cond}.
Then~$f$ is $(\si_x, \si_y)$-summable in~$\overline{\bF(x)}(y)$ if and only if
for each~$i\in \{1, 2, \ldots, m\}$, we have
\[\beta_i = \frac{s_i}{t_i}x + c_i, \quad \text{where~$s_i\in \bZ$, $t_i\in\bN\setminus \{0\}$, and~$c_i\in \bF$},\]
and~$\alpha_{i, j}= \si_x^{t_i}(\gamma_i) -\gamma_i$ for some~$\gamma_i\in {\bF(x)}$.
\end{theorem}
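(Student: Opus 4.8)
The plan is to reduce the bivariate summability of $f = r_j = \sum_{i=1}^m \alpha_{i,j}/(y-\beta_i)^j$ to a collection of \emph{independent} univariate problems over $\overline{\bF(x)}$, one for each $\bZ$-orbit of the $\beta_i$'s, using the discrete residue criterion of Proposition~\ref{PROP:ratsum}. First I would prove sufficiency: given the stated conditions, Lemma~\ref{LM:simplefrac} directly shows each summand $\alpha_{i,j}/(y-\beta_i)^j$ is $(\si_x,\si_y)$-summable in $\overline{\bF(x)}(y)$, and a sum of summable functions is summable, so $f$ is summable. This direction is essentially immediate once Lemma~\ref{LM:simplefrac} is in hand.

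For necessity, suppose $f = \si_x(g) - g + \si_y(h) - h$ for some $g, h \in \overline{\bF(x)}(y)$. The key structural point is that because the $\beta_i$'s lie in distinct $\bZ$-orbits \emph{and} satisfy condition~\eqref{EQ:cond} (no two are $\bZ$-equivalent after any $\si_x$-shift), the poles of $f$ in $y$ fall into $\si_x$-stable clusters that cannot interact: applying $\si_x$ to $g$ or $\si_y$ to $h$ moves poles only within a single orbit-class $[\beta_i]$. More precisely, I would decompose $g$ and $h$ into $y$-partial fractions over $\overline{\bF(x)}$, group the pole locations by their $\bZ$-orbit and further by the $\si_x$-orbit of the ``base point'' $\beta_i$, and observe that the equation $f - \si_y(h) + h = \si_x(g) - g$ forces, orbit-by-orbit, that $\alpha_{i,j}/(y-\beta_i)^j$ is $\si_y$-summable \emph{modulo} the action of $\si_x$ on the coefficient. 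Concretely, isolating the part of the equation supported on $[\beta_i]$, one gets that $f$ restricted to that orbit is $(\si_x,\si_y)$-summable, and then Proposition~\ref{PROP:ratsum} (applied with $\bE = \overline{\bF(x)}$ after absorbing the $\si_x$-difference of $g$) says the relevant discrete residue in $y$ must vanish; tracking how $\si_x$ permutes the poles $\si_x^n(\beta_i)$ and acts on coefficients, the vanishing of that residue becomes the assertion that $\si_x^{t_i}(\beta_i) - \beta_i \in \bZ$ for some $t_i$, and that $\alpha_{i,j}$ is an $\si_x^{t_i}$-difference. From $\si_x^{t_i}(\beta_i) - \beta_i = s_i \in \bZ$, Lemma~\ref{LM:intlin} gives $\beta_i = \frac{s_i}{t_i}x + c_i$ with $c_i \in \bF$; one should also check $t_i$ can be taken in $\bN \setminus \{0\}$ and that $\gamma_i$ can be taken in $\bF(x)$ rather than merely $\overline{\bF(x)}$ (the latter by a trace/Galois-descent argument as in Theorem~\ref{THM:sum2}, or by directly solving the first-order equation $\si_x^{t_i}(\gamma_i) - \gamma_i = \alpha_{i,j}$ over $\bF(x)$ using that $\beta_i$ and hence $\alpha_{i,j}$ can be arranged to be rational).

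The main obstacle is making the ``orbit-by-orbit decoupling'' rigorous: one must argue that in the identity $f = \si_x(g) - g + \si_y(h) - h$ the contributions of $g$ and $h$ with poles outside the $\si_x$-$\bZ$-orbit-class of a given $\beta_i$ genuinely cannot cancel against the $\beta_i$-part of $f$. The clean way is to fix $i$, apply $\si_x^{-n}$ repeatedly (or sum a telescoping series à la Fact~\ref{FACT:1} and Fact~\ref{FACT:2}) to replace $g, h$ by versions whose $[\beta_i]$-supported poles are all based at $\beta_i$ itself, and then take the coefficient of $1/(y-\beta_i)^j$ — but care is needed because $g$ may have infinitely many poles of the form $\si_x^n(\beta_i)$, so one should instead argue via the finiteness of the pole set of the fixed rational function $f$ together with the fact that $\si_x$ has infinite order, exactly as the dispersion/telescoping arguments behind Proposition~\ref{PROP:ratsum} do. Once the decoupling is established, everything reduces to a single-orbit computation, and the remaining steps are routine applications of Lemmas~\ref{LM:simplefrac} and~\ref{LM:intlin} and the residue criterion.
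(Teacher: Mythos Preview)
Your proposal is correct and takes essentially the same approach as the paper: sufficiency via Lemma~\ref{LM:simplefrac}, necessity by writing $g$ in $y$-partial fractions with poles $\mu_1,\ldots,\mu_n$ in distinct $\bZ$-orbits, matching discrete residues of both sides of $f=\si_x(g)-g+\si_y(h)-h$ orbit by orbit, and running a pigeonhole/chain argument on the finite set $\Lambda=\{\mu_k,\si_x(\mu_k)\}$ to force $\si_x^{t_i}(\beta_i)-\beta_i\in\bZ$ (whence Lemma~\ref{LM:intlin} gives the form of $\beta_i$) and to read off $\alpha_{i,j}=\si_x^{t_i}(\gamma_i)-\gamma_i$. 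Two small remarks: your concern that ``$g$ may have infinitely many poles of the form $\si_x^n(\beta_i)$'' is unfounded, since $g$ is a fixed rational function with finitely many poles---that finiteness is exactly what drives the chain argument; and the paper gets $\gamma_i\in\bF(x)$ not by trace descent but simply by noting that once $\beta_i\in\bF(x)$ the relevant coefficient $\lambda$ of $g$ already lies in $\bF(\mu)\subset\bF(x)$.
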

\begin{proof} The sufficiency follows from Lemma~\ref{LM:simplefrac}.
For the necessity, we assume that~$f$ is $(\si_x, \si_y)$-summable in~$\overline{\bF(x)}(y)$, i.e., there exist~$g, h\in \overline{\bF(x)}(y)$ such that
\begin{equation}\label{EQ:thm}
f = \si_x(g) - g + \si_y(h) - h.
\end{equation}
We decompose the rational function~$g$ into the form
\begin{equation}\label{EQ:thm2}
g = \si_y(g_1)-g_1 + g_2 + \frac{\lambda_1}{(y- \mu_1)^j} + \cdots + \frac{\lambda_n}{(y-\mu_n)^j},
\end{equation}
where~$g_1, g_2\in \bF(x, y)$, $g_2$ is a rational function
having no terms of the form $1/(y-\nu)^j$ in its partial fraction decomposition with respect
to $y$,~$\lambda_k, \mu_k\in \overline{\bF(x)}$, and the~$\mu_k$'s are in distinct~$\bZ$-orbits.

\medskip
\noindent {\bf Claim 1.} For each~$i\in \{1, 2, \ldots, m\}$, at least one element of the set
\[\Lambda := \{\mu_1, \ldots, \mu_n, \si_x(\mu_1), \ldots, \si_x(\mu_n)\}\]
is in the same $\bZ$-orbit as~$\beta_i$. For each~$\eta \in \Lambda $, there is  one
element of  $ \Lambda\backslash \{\eta\} \cup \{ \beta_1, \ldots , \beta_m\}$ that is $\bZ$-equivalent to~$\eta$.

\medskip
\noindent {\bf Proof of Claim 1.}
Suppose no element of~$\Lambda$ is in the same $\bZ$-orbit
as~$\beta_i$.  Taking the discrete residues on both sides of~\eqref{EQ:thm}, we get~$\dres_y(f, \beta_i, j) = \alpha_{i, j}\neq 0$
and~$\dres_y(\si_x(g)-g + \si_y(h)-h, \beta_i, j) =0$, which is a contradiction. The second assertion follows from
the same argument.

\medskip
Claim 1 implies that either $\beta_i \sim_{\bZ} \mu'_1$ or $\beta_i \sim_{\bZ} \si_x(\mu'_1)$ for some $\mu'_1 \in \{\mu_1, \ldots, \mu_n\}$. We shall deal with each case separately.

\medskip
{\noindent {\bf Claim 2.} Assume $\beta_i \sim_{\bZ} \mu'_1$.\\[0.05in]
 (a)  Fix  $\beta_i$ and $j \in \bN, j \geq 2$ and assume that $\si_x^{\ell} \beta_i \not\sim_\bZ \beta_i$ for $1 \leq \ell \leq j-1$. Then there exist $\mu'_1, \ldots \mu'_j \in \{\mu_1, \ldots, \mu_n\}$ such that
\begin{equation}\label{EQ:claim2a}
\si_x^{j-1}(\beta_i)\sim_{\bZ}\mu'_j, \text{ and }
\end{equation}
\begin{equation}\label{EQ:claim2b}
\si_x(\mu'_{1})\sim_{\bZ} \mu'_{2}, \, \, \si_x(\mu'_{2})\sim_{\bZ} \mu'_{3},  \, \, \ldots,  \, \, \si_x(\mu'_{j-1}) \sim_{\bZ} \mu'_{{j}}.
\end{equation}
(b) There exists~$t_i\in \bN, t_i \neq 0$ such that~$t_i\leq n$ and~$\si_x^{t_i}(\beta_i)-\beta_i\in \bZ$. For the smallest such $t_i$, there exist $\mu'_1, \ldots \mu'_{t_i-1} \in \{\mu_1, \ldots, \mu_n\}$ such that
\begin{equation}\label{EQ:claim2c}
\si_x(\mu'_{1})\sim_{\bZ} \mu'_{2}, \, \, \si_x(\mu'_{2})\sim_{\bZ} \mu'_{3},  \, \, \ldots,  \, \, \si_x(\mu'_{t_i-1}) \sim_{\bZ} \mu'_{{t_i}}, \si_x(\mu'_{t_i}) \sim_{\bZ} \beta_i.
\end{equation}

\medskip
\noindent {\bf Proof of Claim 2.}
(a)  Let us first assume that $j = 2$. From the second part of Claim 1, we have  that $\si_x(\mu'_1)$ is $\bZ$-equivalent to an element of   $ \Lambda\backslash \{\si_x(\mu'_1)\} \cup \{ \beta_1, \ldots , \beta_m\}$. If $\si_x(\mu'_1)$ is $\bZ$-equivalent to some $\beta_\ell$ for $\ell \neq i$, then $\si_x(\beta_\ell) \sim_\bZ \beta_i$, contradicting \eqref{EQ:cond}. If $\si_x(\mu'_1)$ is $\bZ$-equivalent to  $\beta_i$,
then we would have $\si_x(\beta_i) \sim_\bZ  \si_x(\mu'_i) \sim_\bZ \beta_i$, contradicting} the assumption of Claim 2(a).
{If $\si_x(\mu'_1)\sim_\bZ \si_x(\mu_\ell)$ for some $\mu_\ell \neq \mu'_1$,
then $\mu_\ell \sim_\bZ \mu'_1$, contradiction our assumption that the $\mu_i$ are in distinct $\bZ$-orbits.
Therefore we are left with only one possibility - that $\si_x(\mu'_1) \sim_\bZ \mu'_2$ for some $\mu'_2$
and \eqref{EQ:claim2a} and \eqref{EQ:claim2b} hold for this choice.   Now assume that \eqref{EQ:claim2a}
and \eqref{EQ:claim2b} hold for $j > 2$. Arguing as in the case when $j = 2$, we can verify that there
exists a $\mu'_{j+1}$ such that  \eqref{EQ:claim2a} and \eqref{EQ:claim2b} hold in this case as well.\\[0.1in]
 (b) If such a $t_i$ does not exist, then one could find $\{\mu'_1, \ldots , \mu'_{n+1}\}$ satisfying  \eqref{EQ:claim2a} and \eqref{EQ:claim2b}. In this case, we must have $\mu'_r = \mu'_s$ for some $r>s$ implying that $\si_x^r \beta_i \sim_\bZ \si_x^s \beta_i$. This implies $\si_x^{r-s} \beta_i \sim_\bZ \beta_i$ a contradiction. Therefore the first part of (b) is verified. To verify the second part, apply part (a) to $j = t_i$.

\medskip
\noindent {\bf Claim 3.} Assume $\beta_i \sim_\bZ \si_x(\mu'_1)$.\\[0.05in]
(a)  Fix  $\beta_i$ and $j \in \bN, j \geq 2$ and assume that $\si_x^{\ell} \beta_i \not\sim_\bZ \beta_i$ for $1 \leq \ell \leq j-1$. Then there exist $\mu'_1, \ldots \mu'_j \in \{\mu_1, \ldots, \mu_n\}$ such that
\begin{equation}\label{EQ:claim3a}
\beta_i\sim_{\bZ}\si_x^{j}(\mu'_j), \text{ and }
\end{equation}
\begin{equation}\label{EQ:claim3b}
\mu'_{1}\sim_{\bZ} \si_x(\mu'_{2}), \, \, \mu'_{2}\sim_{\bZ}\si_x( \mu'_{3}),  \, \, \ldots,  \, \, \mu'_{j-1}\sim_{\bZ} \si_x(\mu'_{{j}}).
\end{equation}
(b) There exists~$t_i\in \bN$ such that~$t_i\leq n$ and~$\si_x^{t_i}(\beta_i)-\beta_i\in \bZ$. For the smallest such $t_i$, there exist $\mu'_1, \ldots \mu'_{t_i} \in \{\mu_1, \ldots, \mu_n\}$ such that
\begin{equation}\label{EQ:claim3c}
\mu'_{1}\sim_{\bZ} \si_x(\mu'_{2}), \, \, \mu'_{2}\sim_{\bZ} \si_x(\mu'_{3}),  \, \, \ldots,  \, \, \mu'_{t_i-1} \sim_{\bZ} \si_x(\mu'_{t_i}), \mu'_{t_i} \sim_\bZ \beta_i.
\end{equation}

\medskip
\noindent {\bf Proof of Claim 3.} (a) Let us first assume that $j = 2$.    Again from the second part of Claim 1, we have that $\mu'_1$ is $\bZ$-equivalent to an element of   $ \Lambda\backslash \{\mu'_1\} \cup \{ \beta_1, \ldots , \beta_m\}$. If $\mu'_1$ is $\bZ$-equivalent to some $\beta_\ell$ for $\ell \neq i$, then $\si_x(\beta_\ell) \sim_\bZ \beta_i$, contradicting \eqref{EQ:cond}. If $\mu'_1$ is $\bZ$-equivalent to  $\beta_i$, we contradict the assumption of Claim 2.  If $\mu'_1\sim_\bZ \mu_\ell$ for some $\mu_\ell \neq \mu'_1$, we contradict our assumption that the $\mu_i$ are in distinct $\bZ$-orbits. Therefore we are left with only one possibility - that $\mu'_1 \sim_\bZ \si_x(\mu'_2)$ for some $\mu'_2$.  Therefore \eqref{EQ:claim3a} and \eqref{EQ:claim3b} hold in this case as well. Now assume that \eqref{EQ:claim3a} and \eqref{EQ:claim3b} hold for $j > 2$. Arguing as in the case when $j = 2$, we can verify that there exists a $\mu'_{j+1}$ such that  \eqref{EQ:claim3a} and \eqref{EQ:claim3b} hold in this case as well.\\[0.1in]
(b) If such a $t_i$ does not exist, then one could find $\{\mu'_1, \ldots , \mu'_{n+1}\}$ satisfying  \eqref{EQ:claim3a} and \eqref{EQ:claim3b}. In this case, we must have $\mu'_r = \mu'_s$ for some $r>s$ implying that $\si_x^{-r} \beta_i \sim_\bZ \si_x^{-s} \beta_i$. This implies $\si_x^{r-s} \beta_i \sim_\bZ \beta_i$ a contradiction. Therefore the first part of (b) is verified. To verify the second part, apply part (a) to $j = t_i$. \\[0.2in]
 Using these claims, we now complete the proof. From Claims 2(b) and 3(b), we have that for each $i$ there exists a positive integer $t_i$ such that $\si_x^{t_i}(\beta_i)-\beta_i\in \bZ$. This implies that~$\beta_i = \frac{s_i}{t_i} x + c_i$ for some~$s_i\in \bZ$ and~$c_i\in \bF$
by Lemma~\ref{LM:intlin}. We now turn to verifying the claim of the Theorem concerning the $\alpha_{i,j}$.\\[0.05in]
Fix some $\beta_i$ and assume, as in Claim 2,  that~$\beta_i - \mu'_1 \in \bZ$. We wish to
compare the discrete residues at  $\beta_i$ on the left side of~\eqref{EQ:thm2} with the
discrete residues at the elements of~$\Lambda$ on the right side of~\eqref{EQ:thm2}.
The equivalences of \eqref{EQ:claim2c} give the $\bZ$-orbits in $\Lambda$.
In the following table, the first column lists the $\bZ$-orbits of elements in~$\Lambda$.
The second column equates the discrete residue of this orbit on the left of~\eqref{EQ:thm2}
with the discrete residue of the same orbit on the right of~\eqref{EQ:thm2}.
Note that the orbit listed on the first line corresponds to $\beta_i$ and that the 
other orbits have zero residue on the left of~\eqref{EQ:thm2}.
\begin{center}
\begin{tabular}{|c|c|}  \hline
$\bZ$-orbit &  Comparison of two sides of \eqref{EQ:thm2}\\ \hline
$\mu'_1, \si_x(\mu'_{t_i})$ & $\alpha_{i,j} \ = \ \si_x(\lambda_{t_i}) - \lambda_1$ \\ \hline
$\mu'_{t_i}, \si_x(\mu'_{t_i-1}) $ & $0  \ = \  \si_x(\lambda_{t_i-1}) - \lambda_{t_i}$. \\ \hline
$\mu'_{t_i-1}, \si_x(\mu'_{t_i-2}) $ & $0  \ = \  \si_x(\lambda_{t_i-2}) - \lambda_{t_i-1}$. \\ \hline
  \vdots      &  \vdots     \\ \hline
    $\mu'_3, \si_x(\mu'_{2})$ & $0        \ = \ \si_x(\lambda_2) - \lambda_3$ \\\hline
  $\mu'_2, \si_x(\mu'_{1})$ & $0        \ = \ \si_x(\lambda_1) - \lambda_2$ \\\hline

\end{tabular}
\end{center}
Using the equations in the last column, to eliminate all intermediate terms one can show that $\alpha_{i,j}  =  \si_x^{t_i}(\lambda_1) - \lambda_1$. Since~$\beta_i\in \bF(x)$, ~$\mu_1-\beta_i\in \bZ$, and~$\lambda_1\in \bF(\mu_1)$,
the element~$\lambda_1$ is actually in~$\bF(x)$.\\[0.1in]
We now turn to the situation of Claim 3, that is, assume that $\beta_i -\si_x(\mu'_1) \in \bZ$. As in the previous paragraph, we will compare the discrete residues at the $\beta_i$ on the left side of~\eqref{EQ:thm} with the discrete residues at the elements of~$\Lambda$ on the right side of~\eqref{EQ:thm}. The equivalences of \eqref{EQ:claim3c} give the $\bZ$-orbits.  The following  table summarizes the comparison.

\medskip
\begin{center}
\begin{tabular}{|c|c|}  \hline
$\bZ$-orbit &  Comparison of two sides of \eqref{EQ:thm2}\\ \hline
$\si_x(\mu'_1), \mu'_{t_i}$ & $\alpha_{i,j} \ = \ \si_x(\lambda_{1}) - \lambda_{t_i}$ \\ \hline
$ \si_x(\mu'_{2}), \mu'_{1}$ & $0  \ = \  \si_x(\lambda_{2}) - \lambda_{1}$. \\ \hline
$ \si_x(\mu'_{3}), \mu'_{2}$  & $0  \ = \  \si_x(\lambda_{3}) - \lambda_{2}$. \\ \hline
\vdots      &  \vdots     \\ \hline
$ \si_x(\mu'_{t_i-1}), \mu'_{t_i-2}$ & $0        \ = \ \si_x(\lambda_{t_i-1}) - \lambda_{t_i-2}$ \\\hline
$ \si_x(\mu'_{t_i}), \mu'_{t_i-1}$ & $0        \ = \ \si_x(\lambda_{t_i}) - \lambda_{t_i-1}$ \\\hline
\end{tabular}
\end{center}
Using the equations in the last column, to eliminate all intermediate terms one can show that $\alpha_{i,j}  =  \si_x^{t_i}(\lambda_{t_{i}}) - \lambda_{t_i}$. Since~$\beta_i\in \bF(x)$, ~$\mu_1-\beta_i\in \bZ$, and~$\lambda_1\in \bF(\mu_1)$,
the element~$\lambda_1$ is actually in~$\bF(x)$. }
\QED
\end{proof}

\begin{example}\label{EXAM:ratsum}
Let~$f = 1/(x^n+y^n)$ with~$n\in \bN\setminus \{0\}$. Over the field~$\overline{\bF(x)}$, we can decompose~$f$ into
\begin{equation}\label{EQ:exam}
  f = \sum_{i=1}^n \frac{\alpha_i}{y - \beta_i},
\end{equation}
where~$\beta_i = \omega_i x$ with~$\omega_i$ varying over the roots of~$z^n=-1$ and~$\alpha_i = \frac{1}{n(\omega_ix)^{n-1}}$.
By Theorem~\ref{THM:ratsum}, $f$ is $(\si_x, \si_y)$-summable in~$\bF(x, y)$ if and
only if for all~$i\in \{1, \ldots, n\}$, ~$\omega_i =s_i/t_i$ for some~$s_i\in \bZ$, $t_i\in\bN\setminus \{0\}$,
and~$\alpha_i= \si_x^{t_i}(\gamma_i) -\gamma_i$ for some~$\gamma_i\in {\bF(x)}$.
When~$n>1$, at least one~$\omega_i$ is not a rational number, which implies that~$f$ is not $(\si_x, \si_y)$-summable
in~$\bF(x, y)$.  When~$n=1$,  the discrete residue of~$f$ at~$-x$ is $1$ and~$1=\si_x(x)-x$. Therefore, $f=1/(x+y)$ is
$(\si_x, \si_y)$-summable in~$\bF(x, y)$. In fact, we have
\[\frac{1}{x+y} = \si_x\left( \frac{x}{x+y}\right)  - \frac{x}{x+y} + \si_y\left(\frac{-x-1}{x+y}\right) - \frac{-x-1}{x+y}. \]
\end{example}

\begin{example} Let~$f = 1/xy$. The discrete residue of~$f$ at~$0$ of multiplicity one is~$1/x$. Since~$1/x \neq \si_x(\lambda) -\lambda$ for
any~$\lambda \in \bF(x)$,  $f$ is not $(\si_x, \si_y)$-summable in~$\bF(x, y)$.
\end{example}


\begin{example}\label{EXAM:tornheim}
The harmonic double sums
\[T(r, s, t) = \sum_{n=1}^{\infty}\sum_{m=1}^{\infty} \frac{1}{n^rm^s(n+m)^t}\]
were studied by Tornheim~\cite{Tornheim1950, Tornheim1950b} and Mordell~\cite{Mordell1960}
and many elegant identities have been established between them~\cite{Huard1996, Moll2006, Moll2010, Basu2011}.
Tornheim~\cite[Thm.\ 5]{Tornheim1950} proved that
\begin{equation}\label{EQ:identity}
T(0, 0, t) = \zeta(t-1) - \zeta(t), \quad \text{where~$t>2$ and $\zeta(s) = \sum_{n=1}^{\infty}\frac{1}{n^s}$.}
\end{equation}
We give another proof as follows. Let~$f = 1/(n+m)^t$.
Let~$\si_n, \si_m$ be the shift operators with respect to~$n$ and~$m$, respectively.
Set~$\Delta_n = \si_n-1$ and~$\Delta_m = \si_m - 1$.
By Lemma~\ref{LM:simplefrac}, $f$ is $(\si_n, \si_m)$-summable in~$\bQ(n, m)$.
In fact, we have
\[\frac{1}{(n+m)^t} = \Delta_n\left(\frac{n}{(n+m)^t}\right) + \Delta_m\left(\frac{-n-1}{(n+m)^t}\right).\]
Since
\begin{align*}
  \sum_{n=1}^{\infty}\sum_{m=1}^{\infty} \Delta_n\left(\frac{n}{(n+m)^t}\right)
 &= \sum_{m=1}^{\infty} \left(\sum_{n=1}^{\infty}\Delta_n\left(\frac{n}{(n+m)^t}\right)\right)  \\
   & = \sum_{m=1}^{\infty}\frac{-1}{(1+m)^t} = 1-\zeta(t),
\end{align*}
and
\begin{align*}
\sum_{n=1}^{\infty}\sum_{m=1}^{\infty} \Delta_m\left(\frac{-n-1}{(n+m)^t}\right)
 &= \sum_{n=1}^{\infty} \left(\sum_{m=1}^{\infty}\Delta_m\left(\frac{-n-1}{(n+m)^t}\right)\right) \\
  & = \sum_{n=1}^{\infty}\frac{1}{(1+n)^{t-1}} = \zeta(t-1)-1.
\end{align*}
This completes the proof of the identity~\eqref{EQ:identity}.
\end{example}
\begin{example}
We show the identity
\[\sum_{n=1}^{\infty}\sum_{m=1}^{\infty} \frac{1}{(m+n/2)^3} = 4\zeta(2)-\frac{9}{2}\zeta(3).\]
By Lemma~\ref{LM:simplefrac}, we have
\begin{equation}\label{EQ:identity2}
\frac{1}{(m + n/2)^3} = \Delta_n\left(\frac{n/2}{(m+n/2)^3} + \frac{(n+1)/2}{(m+(n+1)/2)^3}\right)+ \Delta_m\left(\frac{-1-n/2}{(m+n/2)^3}\right).
\end{equation}
Summing both sides of~\eqref{EQ:identity2} with respect to~$n$ and~$m$ yields
\[\sum_{n=1}^{\infty}\sum_{m=1}^{\infty} \frac{1}{(m+n/2)^3} = \sum_{m=1}^{\infty} \left(\frac{-4}{(2m+1)^3} + \frac{-1}{(m+1)^3}\right)
+\sum_{n=1}^{\infty}\frac{4}{(n+2)^2}. \]
Note that
\[ \sum_{m=1}^{\infty} \frac{1}{(2m+1)^3}= \sum_{m=1}^{\infty} \frac{1}{m^3} -1 - \sum_{m=1}^{\infty} \frac{1}{(2m)^3} = \frac{7}{8}\zeta(3)-1,\]
which implies
\[\sum_{n=1}^{\infty}\sum_{m=1}^{\infty} \frac{1}{(m+n/2)^3} = -\frac{7}{2}\zeta(3)+4 - \zeta(3) +1 + 4\zeta(2) - 4- 1=  4\zeta(2)-\frac{9}{2}\zeta(3).\]
\end{example}

\subsection{The $q$-shift case}
In this case, we consider the problem of deciding whether a given rational function~$f\in \bF(x, y)$
is equal to~$\tau_x(g)-g + \tau_y(h) - h$ for some~$g, h\in \bF(x, y)$. By Theorem~\ref{THM:sum2}, this problem is equivalent to that of
deciding whether~$f$ is~$(\tau_x, \tau_y)$-summable in~$\overline{\bF(x)}(y)$.

{We first consider the case in which~$q$ is a root of unity. Let~$m$ be the minimal positive integer such
that~$q^m=1$.  One can show that   $\tau_x(f) = \tau_y(f) = f$ if and only if $f \in \bF(x^m, y^m)$. Furthermore, $[\bF(x, y):\bF(x,y^m)]= m$ and
$[\bF(x, y^m):\bF(x^m,y^m)]= m$ and therefore  $[\bF(x, y):\bF(x^m,y^m)]= m^2$. The field $\bF(x, y)$ is a Galois extension of $\bF(x^m, y^m)$ whose Galois group is the product of the cyclic group generated by $\tau_x$ and the cyclic group generated by $\tau_y$. We now derive a normal form for functions in $\bF(x,y)$ with respect to $\tau_y$ and $\tau_x$.
\begin{lemma}\label{LM:qqru}
Let $q \in \bF$ be such that  $q^m = 1$ with $m$ minimal.  Let $f \in \bF(x,y)$.
\begin{enumerate}
\item[(a)] $f = \tau_x(g) - g + \tau_y(h) - h$ for some $g,h \in \bF(x,y)$ if and only if \linebreak $\Tr_{\bF(x,y)/\bF(x^m,y^m)}(f) = 0$.
\item[(b)] Any rational function~$f\in \bF(x,y)$ can be decomposed into
\begin{equation}\label{EQ:fnq2}
f = \tau_x(g) - g  + \tau_y(h)-h+ c, \quad \text{where~$g,h\in \bF(x,y)$ and~$c\in \bF(x^m,y^m)$}.
\end{equation}
Moreover, $f$ is~$(\tau_x,\tau_y)$-summable if and only if~$c=0$.
\end{enumerate}
\end{lemma}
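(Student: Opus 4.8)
The plan is to deduce both parts from the univariate root-of-unity case, Lemma~\ref{LM:qru}, applied once in the variable $y$ over the base field $\bF(x)$ and once in the variable $x$ over the base field $\bF(y^m)$. I would begin with the decomposition asserted in part~(b). View $\bF(x,y)$ as $\bE(y)$ with $\bE = \bF(x)$; this $\bE$ has characteristic zero, contains $q$, and contains all $m$-th roots of unity because $\bF$ does, so Lemma~\ref{LM:qru}(b) applies and yields $f = \tau_y(h) - h + c_1$ with $h \in \bF(x,y)$ and $c_1 \in \bF(x)(y^m) = \bF(x,y^m)$. Next view $\bF(x,y^m)$ as $\bF(y^m)(x)$, a field again of characteristic zero containing $q$ and all $m$-th roots of unity, on which $\tau_x$ acts as the $q$-shift in $x$ fixing $\bF(y^m)$; applying Lemma~\ref{LM:qru}(b) with $x$ in the role of $y$ gives $c_1 = \tau_x(g) - g + c$ with $g \in \bF(x,y^m) \subseteq \bF(x,y)$ and $c \in \bF(y^m)(x^m) = \bF(x^m,y^m)$. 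Substituting produces $f = \tau_x(g) - g + \tau_y(h) - h + c$ with $c \in \bF(x^m,y^m)$, which is \eqref{EQ:fnq2}.

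I would then prove part~(a). The ``only if'' direction is easy: since $\tau_x$ and $\tau_y$ belong to $\operatorname{Gal}(\bF(x,y)/\bF(x^m,y^m))$, and the trace is the sum over this Galois group, one has $\Tr_{\bF(x,y)/\bF(x^m,y^m)}(\tau_x(u)) = \Tr_{\bF(x,y)/\bF(x^m,y^m)}(u)$ for every $u$ (the summation is merely reindexed by composing with $\tau_x$), and similarly for $\tau_y$; hence $\Tr(\tau_x(g)-g) = \Tr(\tau_y(h)-h) = 0$, so $f = \tau_x(g)-g+\tau_y(h)-h$ forces $\Tr(f) = 0$. For the ``if'' direction, given $\Tr(f) = 0$, I use the decomposition from part~(b): applying the trace and the observation just made, $0 = \Tr(f) = \Tr(c) = [\bF(x,y):\bF(x^m,y^m)]\,c = m^2 c$, and since the characteristic is zero this gives $c = 0$, so $f$ is $(\tau_x,\tau_y)$-summable. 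The ``Moreover'' clause of part~(b) is then immediate: if $c = 0$ then $f$ is visibly summable, while if $f$ is summable then part~(a) yields $\Tr(f) = 0$, hence $m^2 c = 0$ and $c = 0$.

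I do not expect a genuine obstacle here; the argument is a clean two-step reduction to Lemma~\ref{LM:qru}. The only point needing care is the bookkeeping in the second application of that lemma --- checking that $\bF(y^m)$ still meets its hypotheses (characteristic zero, contains $q$ and all $m$-th roots of unity) and that $\tau_x$ restricted to $\bF(x,y^m) = \bF(y^m)(x)$ is indeed the $q$-shift operator in $x$ over $\bF(y^m)$, fixing the base. These hold because $\bF$ is algebraically closed of characteristic zero. The structural facts used, namely that $\bF(x,y)/\bF(x^m,y^m)$ is Galois with Galois group $\langle\tau_x\rangle\times\langle\tau_y\rangle$ of degree $m^2$, are exactly those recorded in the paragraph preceding the lemma.
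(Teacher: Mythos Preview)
Your proof is correct. Both you and the paper reduce to Lemma~\ref{LM:qru} applied successively in $y$ and in $x$, but the routes differ in ordering and in which half of that lemma is used. The paper proves (a) first: it invokes Lemma~\ref{LM:qru}(a) (Hilbert~90) together with Lemma~\ref{lem:sum} to show that $\Tr_{\bF(x,y)/\bF(x,y^m)}$ commutes with $\tau_x$, then uses transitivity of the trace; for (b) it expands $f = \sum_{i,j} a_{i,j}\, x^i y^j$ with $a_{i,j} \in \bF(x^m,y^m)$ and notes that each $x^i y^j$ with $(i,j)\neq(0,0)$ has zero trace, hence is summable by part~(a). You instead establish the decomposition in (b) first via two applications of Lemma~\ref{LM:qru}(b), and then obtain (a) from that decomposition together with the one-line observation that the trace, being a sum over the Galois group, is invariant under pre-composition by $\tau_x$ or $\tau_y$. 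Your ordering is a little more economical---it sidesteps both Lemma~\ref{lem:sum} and the explicit basis computation---while the paper's order has the minor expository advantage of keeping (a) logically prior to (b).
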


\begin{proof} (a) Assume $f = \tau_x(g) - g + \tau_y(h) - h$. Applying Lemma~\ref{LM:qru}(a) to $f-\tau_x(g) - g$ we have
\begin{align*}
  0 & = \Tr_{\bF(x,y)/\bF(x,y^m)} (f-(\tau_x(g) - g))\\
   & = \Tr_{\bF(x,y)/\bF(x,y^m)}(f) - \Tr_{\bF(x,y)/\bF(x,y^m)}(\tau_x(g) - g).
\end{align*}
Lemma~\ref{lem:sum} implies \[\Tr_{\bF(x,y)/\bF(x,y^m)}(\tau_x(g) - g) = \tau_x(\Tr_{\bF(x,y)/\bF(x,y^m)}(g)) - \Tr_{\bF(x,y)/\bF(x,y^m)}(g).\]  Therefore \[\Tr_{\bF(x,y)/\bF(x,y^m)}(f) =  \tau_x(\Tr_{\bF(x,y)/\bF(x,y^m)}(g)) - \Tr_{\bF(x,y)/\bF(x,y^m)}(g).\]  Applying  Lemma~\ref{LM:qru}(a), we have $\Tr_{\bF(x,y^m)/\bF(x^m,y^m)}(\Tr_{\bF(x,y)/\bF(x,y^m)}(f)) = 0$. Since $\Tr_{\bF(x,y)/\bF(x,y)} = \Tr_{\bF(x,y^m)/\bF(x^m,y^m)}\circ \Tr_{\bF(x,y)/\bF(x,y^m)}$ (see~\cite[Thm. 5.1, p.~285]{LANG}), we have $\Tr_{\bF(x,y)/\bF(x^m,y^m)}(f) = 0$.\\
Now assume that
\[0=\Tr_{\bF(x,y)/\bF(x^m,y^m)}(f) = \Tr_{\bF(x,y^m)/\bF(x^m,y^m)}(\Tr_{\bF(x,y)/\bF(x,y^m)}(f)).\]
Lemma~\ref{LM:qru}(a) implies that $\Tr_{\bF(x,y)/\bF(x,y^m)}(f) = \tau_x(g) - g$ for
some $g \in \bF(x,y^m)$.  Note that
\begin{eqnarray*}
 \Tr_{\bF(x,y)/\bF(x,y^m)}(mf -\Tr_{\bF(x,y)/\bF(x,y^m)}(f))& =&\\ m\Tr_{\bF(x,y)/\bF(x,y^m)}(f) - m \Tr_{\bF(x,y)/\bF(x,y^m)}(f) &=& 0.
\end{eqnarray*}
Therefore, Lemma~\ref{LM:qru}(a) implies $mf -\Tr_{\bF(x,y)/\bF(x,y^m)}(f)) = \tau_y(h) - h$  for some $h \in \bF(x,y)$.  Therefore,
\[f = \frac{1}{m}(\tau_x(g) - g + \tau_y(h) - h).\]
(b) We can write
\[ f = \left(\sum_{0\leq i,j \leq m-1, (i,j) \neq (0,0)}a_{i,j}x^iy^j\right) + a_{0,0},\]
where all $a_{i,j} \in \bF(x^m,y^m)$.  A calculation shows that $\Tr_{\bF(x,y)/\bF(x^m,y^m)}(x^iy^j)= 0$ when $0\leq i,j \leq m-1, (i,j) \neq (0,0)$.
Therefore part (a) implies  that $f = \tau_x(g) - g  + \tau_y(h)-h+ a_{0,0}$. Therefore $f$ is~$(\tau_x,\tau_y)$-summable if and
only if the trace $\Tr_{\bF(x,y)/\bF(x^m,y^m)}(a_{0,0}) = 0$ but this is true if and only if $a_{0,0} = 0$.
\QED
\end{proof}
Similar to the comment following Lemma~\ref{LM:qru}(a), one sees that it is easy to verify
the~$(\tau_x,\tau_y)$-summability of any $f \in \bF(x,y)$.

\begin{example}\label{EXAM:qratsumru}
Let~$f = 1/(x^n+y^n)$ with~$n\in \bN\setminus \{0\}$.
Recall that~$m$ is the minimal positive integer such that~$q^m=1$.
Write~$n = qm + r$ with~$0\leq r< m$. Then~$x^n + y^n = x^r (x^m)^q + y^r (y^m)^q$.
This implies that~$f\in \bF(x^m, y^m)$ if and only if~$r=0$.
Since~$f$ is nonzero, we have~$f$ is not~$(\tau_x, \tau_y)$-summable
in~$\bF(x, y)$ when~$r=0$ by Lemma~\ref{LM:qqru}. In the case when~$r\neq 0$, we have
\[
\frac{1}{x^n+y^n} = \tau_x\left(\frac{c_n}{x^n + y^n}\right)- \frac{c_n}{x^n + y^n} +
\tau_y\left(\frac{c_n}{q^nx^n + y^n}\right)- \frac{c_n}{q^nx^n + y^n},
\]
where~$c_n = q^n/(1-q^n)=q^r/(1-q^r)$, which means that~$f$ is~$(\tau_x, \tau_y)$-summable
in~$\bF(x, y)$ in this case.
\end{example}


}

From now on, we assume that~$q$ is not a root of unity.
\begin{lemma}\label{LM:qsf}
Let~$f$ be a rational function in~$\bF(x, y)$.
Then~$f$ can be decomposed into~$f = \tau_x(g) - g + \tau_y(h) - h + r$,
where~$g, h \in \overline{\bF(x)}(y)$ and~$r$ is of the form
\begin{equation} \label{EQ:qr}
r = c + \sum_{i=1}^m \sum_{j=1}^{n_i} \frac{\alpha_{i, j}}{(y-\beta_i)^j},
\end{equation}
with~$c\in \bF(x)$, $\alpha_{i, j}, \beta_i\in \overline{\bF(x)}$, $\alpha_{i, j}\neq 0$, $\beta_i \neq 0$ and for all~$i, i^\prime$ with~$1\leq i < i^\prime \leq m$
\begin{equation}\label{EQ:qcond}
\frac{\beta_i}{\tau_x^n(\beta_{i'})}\notin q^\bZ \quad \text{ for any~$n\in \bZ$.}
\end{equation}
Moreover, the rational function~$f$ is $(\tau_x, \tau_y)$-summable in~$\overline{\bF(x)}(y)$ if and only if
the function~$r\in \overline{\bF(x)}(y)$ is~$(\tau_x, \tau_y)$-summable in~$\overline{\bF(x)}(y)$.
\end{lemma}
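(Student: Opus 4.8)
The plan is to follow the proof of Lemma~\ref{LM:dsf} almost verbatim, replacing the shift versions of Facts~\ref{FACT:1} and~\ref{FACT:2} by their $q$-shift specializations and keeping careful track of the powers of $q$ that appear. Put $\bE = \overline{\bF(x)}$ and view $f$ as an element of $\bE(y)$. By the $q$-shift discussion of Section~\ref{SUBSECT:uqshift} (the normal form obtained from Proposition~\ref{PROP:ratqsum} applied to the $q$-partial fraction decomposition~\eqref{EQ:qparfrac}), there are $\bar g\in\bE(y)$, a constant $c$, finitely many nonzero $\beta_i\in\bE$ in distinct $q^\bZ$-orbits, and $\bar\alpha_{i,j}\in\bE$ with
\[
f = \tau_y(\bar g) - \bar g + c + \sum_{i=1}^m\sum_{j=1}^{n_i}\frac{\bar\alpha_{i,j}}{(y-\beta_i)^j}.
\]
Here $c = \qres_y(f,\infty)$ is the coefficient of $y^0$ in the polynomial part of $f$ with respect to $y$. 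Since $\tau_y$ fixes $\bE$, for every $u\in\bE(y)$ the difference $\tau_y(u)-u$ has zero such coefficient, so $c$ is unchanged by this reduction; and because $f\in\bF(x,y)$ its polynomial part in $y$ has coefficients in $\bF(x)$, whence $c\in\bF(x)$. This $c$ will not be touched in what follows.

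Next I would eliminate the pairs violating~\eqref{EQ:qcond}. Suppose $1\le i<i'\le m$ and $\beta_i = q^{s}\,\tau_x^{t}(\beta_{i'})$ for some $s,t\in\bZ$; since $\beta_i$ and $\beta_{i'}$ lie in distinct $q^\bZ$-orbits we must have $t\neq 0$, and, as $\tau_x$ fixes $q$, $\tau_x^{-t}(\beta_i)=q^{s}\beta_{i'}$. Applying Fact~\ref{FACT:2} with $\phi=\tau_x$ (using inverse iterates when $t<0$) moves the pole at $\beta_i$ to one at $\tau_x^{-t}(\beta_i)=q^{s}\beta_{i'}$, replacing the residue $\bar\alpha_{i,j}$ by $\tau_x^{-t}(\bar\alpha_{i,j})$; then Fact~\ref{FACT:1} with $\phi=\tau_y$ (note $\tau_y^n(y)=q^ny$, so $(\tau_y^n(y)-\beta)^j = q^{nj}(y-q^{-n}\beta)^j$) moves that pole from $q^{s}\beta_{i'}$ to $\beta_{i'}$ and multiplies the residue by $q^{-sj}$. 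Altogether,
\[
\frac{\bar\alpha_{i,j}}{(y-\beta_i)^j} \;\equiv\; \frac{q^{-sj}\,\tau_x^{-t}(\bar\alpha_{i,j})}{(y-\beta_{i'})^j}
\pmod{(\tau_x-1)\bE(y)+(\tau_y-1)\bE(y)}.
\]
Hence the two terms at $\beta_i$ and $\beta_{i'}$ can be fused into the single term at $\beta_{i'}$ with residue $\bar\alpha_{i',j}+q^{-sj}\tau_x^{-t}(\bar\alpha_{i,j})$ (still in $\bE$), the correction being absorbed into $\bar g$ and into a new $\tau_x$-difference; if the fused residue vanishes the term disappears entirely. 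In every case the number of distinct poles strictly decreases, the surviving $\beta$'s remain nonzero, and $c$ is untouched. Because pole locations are only deleted and never created, no pair satisfying~\eqref{EQ:qcond} can come to violate it, so after finitely many steps we obtain $r$ of the form~\eqref{EQ:qr} with $f-r\in(\tau_x-1)\bE(y)+(\tau_y-1)\bE(y)$. Collecting all the corrections yields $g,h\in\overline{\bF(x)}(y)$, and the last assertion on $(\tau_x,\tau_y)$-summability in $\overline{\bF(x)}(y)$ is then immediate.

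I do not expect a genuine obstacle, since this is a routine $q$-analogue of Lemma~\ref{LM:dsf}. The only steps needing care are: the bookkeeping of the powers of $q$ when chaining Facts~\ref{FACT:1} and~\ref{FACT:2} (verifying that the fused residue is exactly $\bar\alpha_{i',j}+q^{-sj}\tau_x^{-t}(\bar\alpha_{i,j})$ and still lies in $\bE$); the observation — the one place where $f\in\bF(x,y)$ rather than merely $f\in\bE(y)$ is used — that the constant $c$ actually lies in $\bF(x)$; and the remark that each fusion strictly decreases the pole count, so the reduction terminates.
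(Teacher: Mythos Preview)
Your proof is correct and follows essentially the same route as the paper's own proof: first apply the $q$-shift normal form of Section~\ref{SUBSECT:uqshift} to obtain $\bar r$ with poles in distinct $q^{\bZ}$-orbits, then use Facts~\ref{FACT:1} and~\ref{FACT:2} to fuse any pair of poles related by a power of $\tau_x$ (the paper records the resulting residue as $\tau_x^{-t}(q^{-sj}\bar\alpha_{i,j})-\bar\alpha_{i',j}$, which agrees with your $q^{-sj}\tau_x^{-t}(\bar\alpha_{i,j})$ up to the sign convention used there). You add a little more detail than the paper---the explicit reason $c\in\bF(x)$, the termination-by-pole-count argument, and the remark that the survival of condition~\eqref{EQ:qcond} is automatic---but none of this departs from the paper's method.
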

\begin{proof}
Let~$\bE = \overline{\bF(x)}$. According to the discussion in~Section~\ref{SUBSECT:uqshift},
there exist~$\bar g, \bar{r}\in \bE(y)$
such that~$f = \tau_y(\bar{g}) - \bar{g} + \bar{r}$, where~$\bar{r}$ is of the form
\[\bar{r}= c + \sum_{i=1}^m\sum_{j=1}^{n_i} \frac{\bar{\alpha}_{ij}}{(y-\beta_i)^j}
\]
with~$c\in \bF(x)$, $\bar{\alpha}_{ij}, \beta_i\in \overline{\bE}$
and~$\beta_i$'s being in distinct $q^\bZ$-orbits.
Assume that for some~$i, i^\prime$ with~$1\leq i < i^\prime \leq m$ and~$t\in \bZ$, we
have~$\frac{\beta_i}{\tau_x^t(\beta_{i'})} = q^s \in q^\bZ$. Since~$\beta_i$
and~$\beta_{i^\prime}$ are in distinct $q^\bZ$-orbits, we then have~$t\neq 0$.
By Facts~\ref{FACT:1} and~\ref{FACT:2}, there exist~$g_{i, j}, h_{i, j}\in \bE(y)$ such that
\[\frac{\bar \alpha_{i, j}}{(y-\beta_i)^j} - \frac{\bar \alpha_{i', j}}{(y-\beta_{i'})^j} =
\tau_x(g_{i, j})-g_{i, j} + \tau_y(h_{i, j}) -h_{i, j} +
\frac{\tau_x^{-t}(q^{-sj}\bar \alpha_{i, j})-\bar \alpha_{i', j}}{(y-\beta_{i'})^j}.\]
This allows us to eliminate a term and we can repeat this process until the~$\beta_i$'s
satisfy the condition~\eqref{EQ:qcond}. The remaining equivalence is obvious.
\QED
\end{proof}

\begin{lemma}\label{LM:qsimplefrac}
Let~$\alpha, \beta\in \overline{\bF(x)}$. If~$\beta=c x^{s/t}$ with~$s\in \bZ$, $t\in \bN\setminus \{0\}$ and~$c\in {\bF}$
and~$\alpha =q^{-sj} \tau_x^t(\gamma)-\gamma$ for some~$\gamma\in  \overline{\bF(x)}$, then the fraction~$\frac{\alpha}{(y-\beta)^j}$ is
$(\tau_x, \tau_y)$-summable in~$\overline{\bF(x)}(y)$.
\end{lemma}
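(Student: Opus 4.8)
The plan is to imitate the proof of Lemma~\ref{LM:simplefrac}, replacing the additive relations of the shift case by their multiplicative $q$-analogues, and to invoke Proposition~\ref{PROP:ratqsum} in place of Proposition~\ref{PROP:ratsum}. First I would dispose of a degenerate case: we may assume $\beta\neq 0$, since if $\beta=0$ (i.e.\ $c=0$) then $\alpha/y^j=\tau_y(h)-h$ with $h=\frac{\alpha}{(q^{-j}-1)y^j}$, which is legitimate because $q$ is not a root of unity. Assuming $\beta\neq 0$, I would set
\[g = \sum_{\ell=0}^{t-1} \frac{\tau_x^\ell(\gamma)}{(y-\tau_x^\ell(\beta))^j} \in \overline{\bF(x)}(y),\]
and, using that $\tau_x$ fixes $y$, observe that the sum telescopes, giving
\[\tau_x(g) - g = \frac{\tau_x^t(\gamma)}{(y-\tau_x^t(\beta))^j} - \frac{\gamma}{(y-\beta)^j}.\]

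The one computation needing care is the identity $\tau_x^t(\beta)=q^s\beta$. Writing $\beta = c\,(x^{1/t})^s$ for a fixed $t$-th root $x^{1/t}$ of $x$ in $\overline{\bF(x)}$, the element $\tau_x(x^{1/t})$ is \emph{some} $t$-th root of $qx$, but $\tau_x^t(x^{1/t}) = q\,x^{1/t}$ since the ambiguous $t$-th root of unity is annihilated upon iterating $\tau_x$ exactly $t$ times; hence $\tau_x^t(\beta) = c\,(q\,x^{1/t})^s = q^s\beta$. Consequently
\[r := \frac{\alpha}{(y-\beta)^j} - \bigl(\tau_x(g)-g\bigr) = \frac{\alpha+\gamma}{(y-\beta)^j} - \frac{\tau_x^t(\gamma)}{(y-q^s\beta)^j},\]
which is a proper rational function in $y$ whose $y$-poles all lie in the single $q^\bZ$-orbit $[\beta]_q$ and all have multiplicity $j$.

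It then remains to show $r$ is $\tau_y$-summable in $\overline{\bF(x)}(y)$, and I would do this by computing its $q$-discrete residues. Rewriting $y-q^s\beta = q^s\bigl(\tau_y^{-s}(y)-\beta\bigr)$ puts $r$ into the normal form~\eqref{EQ:qparfrac} on the orbit $[\beta]_q$ (choosing the representative so the exponents of $q$ occurring are nonnegative), from which one reads off
\[\qres_y(r, [\beta]_q, j) = (\alpha+\gamma) - q^{-sj}\tau_x^t(\gamma),\]
and this vanishes precisely because $\alpha = q^{-sj}\tau_x^t(\gamma)-\gamma$; moreover $\qres_y(r,\infty)=0$ since $r$ is proper with denominator not a power of $y$, and all remaining $q$-discrete residues vanish trivially. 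Proposition~\ref{PROP:ratqsum} then yields $h\in\overline{\bF(x)}(y)$ with $r=\tau_y(h)-h$, whence $\frac{\alpha}{(y-\beta)^j} = \tau_x(g)-g+\tau_y(h)-h$ is $(\tau_x,\tau_y)$-summable. The only genuinely delicate point is bookkeeping the powers of $q$ — both in the identity $\tau_x^t(\beta)=q^s\beta$ and, above all, in matching the coefficient $-q^{-sj}\tau_x^t(\gamma)$ against the normalization of the $q$-discrete residue in~\eqref{EQ:qparfrac} — so that the hypothesis on $\alpha$ lands exactly; the rest is routine telescoping and partial-fraction manipulation already familiar from the shift case, and one notes that since $q$ is not a root of unity the two poles $\beta,q^s\beta$ of $r$ do not collapse unless $s=0$, in which case $r=0$ outright.
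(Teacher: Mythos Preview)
Your proof is correct and follows essentially the same approach as the paper's: you define the identical telescoping sum $g=\sum_{\ell=0}^{t-1}\tau_x^\ell(\gamma)/(y-\tau_x^\ell(\beta))^j$, compute the same remainder $r$, and invoke Proposition~\ref{PROP:ratqsum} once the $q$-discrete residue at $[\beta]_q$ is seen to vanish. You supply more detail than the paper does---the degenerate case $\beta=0$, the justification that $\tau_x^t(\beta)=q^s\beta$ independent of the chosen extension, and the explicit bookkeeping of the factor $q^{-sj}$ in the residue---but the argument is the same.
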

\begin{proof}
Let
\[g = \sum_{\ell=0}^{t-1} \frac{\tau_x^\ell(\gamma)}{(y-\tau_x^\ell(\beta))^j}.\]
Then
\begin{align*}
r = \frac{\alpha}{(y-\beta)^j}-(\tau_x(g)-g) & =
\frac{\alpha}{(y-\beta)^j} - \left(\frac{\tau_x^{t}(\gamma)}{(y-\tau_x^{t}(\beta))^j} - \frac{\gamma}{(y-\beta)^j}\right)
\\
   & =\frac{\alpha + \gamma}{(y-\beta_j)^j} - \frac{\tau_x^{t}(\gamma)}{(y-\tau_x^{t}(\beta))^j}.
\end{align*}
Note that~$\tau_x^{t}(\beta) = q^s \beta$. Since~$\alpha = q^{-sj}\tau_x^{t}(\gamma) -\gamma$, we have the $q$-discrete
residue of~$r$ at~$\beta$ of multiplicity~$j$ is zero.
Applying Proposition~\ref{PROP:ratqsum}, we have that
there exists~$h\in \overline{\bF(x)}(y)$ such that
\[\frac{\alpha}{(y-\beta)^j} = \tau_x(g)-g + \tau_y(h) - h,\]
which completes the proof.
\QED
\end{proof}

We recall a lemma from~\cite{ChenSinger2012}, which is a $q$-analogue of Lemma~\ref{LM:intlin}.
\begin{lemma}[c.f. Lemma 3.8 in~\cite{ChenSinger2012}] \label{LM:qintlin}
Let~$\alpha(x)$ be an element in the algebraic closure of~$\bF(x)$.
If there exists a nonzero~$n\in \bZ$
such that~$\tau_x^n(\alpha)= q^m\alpha$ for some~$m\in \bZ$,
then~$\alpha(t)=c x^{\frac{m}{n}}$ for some~$c\in \bF$.
\end{lemma}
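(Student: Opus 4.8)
The plan is to mimic what one would do for the shift analogue, Lemma~\ref{LM:intlin}: pass to the minimal polynomial of $\alpha$ over $\bF(x)$, exploit the functional equation $\tau_x^n(\alpha)=q^m\alpha$ to force every coefficient of that polynomial to be a monomial in $x$, and then recover $\alpha$ itself as a monomial by a change of variable. We may assume $\alpha\neq 0$, since otherwise we take $c=0$.

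First I would prove an auxiliary statement for rational functions: if $0\neq c\in\bF(x)$ satisfies $\tau_x^n(c)=q^\ell c$ for some $\ell\in\bZ$, then $c=\mu x^e$ with $\mu\in\bF$, $e\in\bZ$, and $ne=\ell$. To see this, write $c=\lambda x^e\,u/v$ with $\lambda\in\bF$ and $u,v\in\bF[x]$ monic, coprime, and not divisible by $x$; the equation becomes $q^{ne}u(q^nx)v(x)=q^\ell u(x)v(q^nx)$, and comparing the coprime factorizations on the two sides forces $u(q^nx)$ and $u(x)$ to be associates, and likewise $v(q^nx)$ and $v(x)$. Since $q$ is not a root of unity and $n\neq 0$, $q^n$ is not a root of unity, so any nonzero root of $u$ (or of $v$) would generate an infinite $q^\bZ$-orbit of roots, which is impossible; hence $u=v=1$, and then $q^{ne}=q^\ell$ yields $ne=\ell$.

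Next I would take $p(z)=z^k+c_{k-1}(x)z^{k-1}+\cdots+c_0(x)\in\bF(x)[z]$ to be the minimal polynomial of $\alpha$. Applying the automorphism $\tau_x^n$ of $\overline{\bF(x)}$ (which stabilizes $\bF(x)$) to $p(\alpha)=0$ and using $\tau_x^n(\alpha)=q^m\alpha$ shows that $\alpha$ is also a root of the monic polynomial $z^k+\sum_{i<k}q^{-m(k-i)}\tau_x^n(c_i)\,z^i$; by minimality this polynomial equals $p$, so $\tau_x^n(c_i)=q^{m(k-i)}c_i$ for every $i$. By the auxiliary statement, each nonzero $c_i$ equals $\mu_i x^{e_i}$ with $e_i=m(k-i)/n\in\bZ$. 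Writing $m/n=m'/n'$ in lowest terms and $v=x^{1/n'}$ for a fixed root, the substitution $z=v^{m'}w$ makes all surviving powers of $v$ collapse to a single one, giving $p(v^{m'}w)=v^{m'k}P(w)$ with $P(w)=w^k+\mu_{k-1}w^{k-1}+\cdots\in\bF[w]$ having coefficients free of $x$. Hence $\alpha v^{-m'}$ is a root of $P$, so it lies in $\bF$ because $\bF$ is algebraically closed; thus $\alpha=c\,v^{m'}=c\,x^{m/n}$ with $c\in\bF$.

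The only step needing genuine care is the auxiliary rational-function statement, in particular making the coprimality and degree bookkeeping in $q^{ne}u(q^nx)v(x)=q^\ell u(x)v(q^nx)$ airtight; everything afterwards is formal manipulation of the minimal polynomial. An alternative would be to deduce the result from Lemma~\ref{LM:intlin} via the operator $\delta=x\,d/dx$, which commutes with $\tau_x$: then $\delta(\alpha)/\alpha$ is $\tau_x^n$-invariant, and one would want to conclude it is the constant $m/n$. However, identifying the fixed field of $\tau_x^n$ on $\overline{\bF(x)}$ seems to require essentially the same minimal-polynomial argument, so the direct route above is cleaner.
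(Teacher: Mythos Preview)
Your argument is correct. The auxiliary claim about rational functions is handled cleanly: writing $c=\lambda x^{e}u/v$ with $u,v$ monic, coprime, and prime to $x$, the relation $q^{ne}u(q^{n}x)v(x)=q^{\ell}u(x)v(q^{n}x)$ together with $\gcd(u(q^{n}x),v(q^{n}x))=1$ forces $u(q^{n}x)$ and $u(x)$ (likewise $v(q^{n}x)$ and $v(x)$) to be associates, and the orbit argument using that $q^{n}$ is not a root of unity kills all nonconstant factors. Transporting the functional equation to the minimal polynomial and matching coefficients gives $\tau_{x}^{n}(c_{i})=q^{m(k-i)}c_{i}$, whence each nonzero $c_{i}$ is $\mu_{i}x^{e_{i}}$ with $ne_{i}=m(k-i)$; in particular, only indices $i$ with $n'\mid(k-i)$ (where $m/n=m'/n'$ in lowest terms) can occur, which is exactly what makes your substitution $z=v^{m'}w$ collapse everything to $v^{m'k}P(w)$ with $P\in\bF[w]$. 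The conclusion $\alpha v^{-m'}\in\bF$ then follows since $\bF$ is algebraically closed.

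As for the comparison: the present paper does not prove this lemma at all; it simply recalls it from \cite{ChenSinger2012} (Lemma~3.8 there), just as Lemma~\ref{LM:intlin} is quoted without proof. So there is no in-paper argument to compare against. Your proof supplies a self-contained argument, and the minimal-polynomial route you take is the natural one; your closing remark that the $\delta=x\,d/dx$ approach would ultimately require the same ingredient is also on point.
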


Note that the $q$-shift operators~$\tau_x$ and~$\tau_y$ preserve the polynomial part and the multiplicities of irreducible factors in the denominators of
rational functions.
Therefore, the rational function~$r$ in~\eqref{EQ:qr} is $(\tau_x, \tau_y)$-summable
in~$\overline{\bF(x)}(y)$ if and only if~$c\in \bF(x)$ is~$\tau_x$-summable in~$\bF(x)$
and for each~$j$,
the rational function
\begin{equation}\label{EQ:qrj}
r_j = \sum_{i=1}^m \frac{\alpha_{ij}}{(y-\beta_i)^j}
\end{equation}
is $(\tau_x, \tau_y)$-summable in~$\overline{\bF(x)}(y)$.

\begin{theorem}\label{THM:qratsum}
Let~$f\in \overline{\bF(x)}(y)$ be of the form~\eqref{EQ:qrj} with~$\alpha_{i, j}, \beta_i$ in~$\overline{\bF(x)}$,
$\alpha_{i, j} \neq 0$, and the~$\beta_i$'s satisfying
the condition~\eqref{EQ:qcond}.
Then~$f$ is $(\tau_x, \tau_y)$-summable in~$\overline{\bF(x)}(y)$ if and only if
for each~$i\in \{1, 2, \ldots, m\}$, we have
\[\beta_i = c x^{{s_i}/{t_i}}, \quad \text{where~$s_i\in \bZ$, $t_i\in\bN\setminus \{0\}$, and~$c_i\in \bF$},\]
and~$\alpha_{i, j}= q^{-js_i }\tau_x^{t_i}(\gamma_i) -\gamma_i$ for some~$\gamma_i\in {\bF(x^{1/t_i})}$.
\end{theorem}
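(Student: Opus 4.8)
The plan is to mirror the structure of the proof of Theorem~\ref{THM:ratsum}, replacing the additive shift structure by the multiplicative $q$-shift structure and discrete residues by $q$-discrete residues. The sufficiency direction is immediate from Lemma~\ref{LM:qsimplefrac}: if $\beta_i = c_i x^{s_i/t_i}$ and $\alpha_{i,j} = q^{-js_i}\tau_x^{t_i}(\gamma_i) - \gamma_i$ with $\gamma_i \in \bF(x^{1/t_i}) \subset \overline{\bF(x)}$, then each summand $\alpha_{i,j}/(y-\beta_i)^j$ is $(\tau_x,\tau_y)$-summable, hence so is $f$. The work is entirely in the necessity direction.

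For necessity, assume $f = \tau_x(g) - g + \tau_y(h) - h$ for some $g, h \in \overline{\bF(x)}(y)$. As in Theorem~\ref{THM:ratsum}, decompose $g$ using the $q$-discrete partial fraction form \eqref{EQ:qparfrac}: write $g = \tau_y(g_1) - g_1 + g_2 + \sum_{k=1}^n \lambda_k/(y-\mu_k)^j$, where $g_2$ has no poles of the form $1/(y-\nu)^j$, the $\mu_k$ are nonzero and lie in distinct $q^\bZ$-orbits, and $\lambda_k \in \overline{\bF(x)}$. The analogue of Claim~1 says that for each $i$, at least one element of $\Lambda := \{\mu_1, \ldots, \mu_n, \tau_x(\mu_1), \ldots, \tau_x(\mu_n)\}$ is $q^\bZ$-equivalent to $\beta_i$, and each $\eta \in \Lambda$ is $q^\bZ$-equivalent to some element of $\Lambda \setminus \{\eta\} \cup \{\beta_1, \ldots, \beta_m\}$; the proof is the same $q$-discrete residue comparison, using Proposition~\ref{PROP:ratqsum} and the fact that $\tau_x, \tau_y$ preserve pole multiplicities. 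Then, exactly as in Claims~2 and~3, one chains the $q^\bZ$-equivalences $\tau_x(\mu'_\ell) \simq \mu'_{\ell+1}$ (or $\mu'_\ell \simq \tau_x(\mu'_{\ell+1})$), using condition~\eqref{EQ:qcond} and the distinctness of $q^\bZ$-orbits to rule out every alternative, and a pigeonhole argument (the $\mu'_\ell$ cannot all be distinct if there were no cycle) to produce a smallest $t_i \leq n$ with $\tau_x^{t_i}(\beta_i) \simq \beta_i$, i.e. $\tau_x^{t_i}(\beta_i) = q^{s_i}\beta_i$ for some $s_i \in \bZ$. Lemma~\ref{LM:qintlin} then gives $\beta_i = c_i x^{s_i/t_i}$ for some $c_i \in \bF$.

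To pin down the $\alpha_{i,j}$, I would again set up the residue-comparison table along the cycle $\mu'_1 \to \mu'_2 \to \cdots \to \mu'_{t_i} \to \beta_i$. The only difference from the shift case is bookkeeping with the factors $q^{-js_i}$ coming from Fact~\ref{FACT:2} and from the $q$-discrete residue normalization (the remark after Definition~\ref{DEF:qres}): comparing $q$-discrete residues on the two sides of the equation along each orbit in the cycle yields one equation $0 = (\text{power of }q)\,\tau_x(\lambda_{\ell}) - \lambda_{\ell+1}$ for the intermediate orbits and $\alpha_{i,j} = (\text{power of }q)\,\tau_x(\lambda_{1}) - \lambda_{t_i}$ (or the analogous equation with roles reversed) for the orbit of $\beta_i$. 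Telescoping these eliminates the intermediate $\lambda_\ell$ and produces $\alpha_{i,j} = q^{-js_i}\tau_x^{t_i}(\gamma_i) - \gamma_i$ with $\gamma_i$ equal to $\lambda_1$ (resp. $\lambda_{t_i}$). Finally, since $\beta_i = c_i x^{s_i/t_i} \in \bF(x^{1/t_i})$ and $\mu'_1$ is $q^\bZ$-equivalent to $\beta_i$ hence also lies in $\bF(x^{1/t_i})$, and $\lambda_1 \in \overline{\bF(x)}$ is a $q$-discrete-residue coefficient of $g$ at a pole in that field, one argues $\gamma_i \in \bF(x^{1/t_i})$, completing the proof.

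The main obstacle I anticipate is the careful tracking of the $q$-power normalization factors through the chain of equivalences and the telescoping sum — getting the exponent to come out to exactly $-js_i$ requires being consistent about whether one uses the $\alpha$-convention or the $\bar\alpha$-convention of Definition~\ref{DEF:qres} at each step, and about the sign of $t$ in Fact~\ref{FACT:2}. The combinatorial skeleton (Claims~1--3 and the pigeonhole) transfers essentially verbatim once $\sim_\bZ$ is replaced by $\simq$ and $\si_x$ by $\tau_x$, so that part should be routine; the field-of-definition argument for $\gamma_i$ also transfers directly.
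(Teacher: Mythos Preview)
Your proposal is correct and follows essentially the same approach as the paper's proof: sufficiency via Lemma~\ref{LM:qsimplefrac}, necessity via the analogues of Claims~1--3 with $\simq$ and $\tau_x$ in place of $\sim_\bZ$ and $\si_x$, Lemma~\ref{LM:qintlin} for the form of $\beta_i$, and a residue-comparison table along the cycle to telescope to $\alpha_{i,j} = q^{-js_i}\tau_x^{t_i}(\gamma_i) - \gamma_i$. The paper does exactly the bookkeeping you anticipate, writing the chain equivalences as $\tau_x(\mu'_k) = q^{\omega_k}\mu'_{k+1}$ so that $s_i = \omega_0 + \cdots + \omega_{t_i}$ and the table entries carry factors $q^{-j\omega_k}$; your caution about the exponent conventions is well placed but the computation is straightforward once these are fixed.
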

\begin{proof}
The sufficiency follows from Lemma~\ref{LM:qsimplefrac}.
The proof of necessity follows the same general lines as the proof of necessity in Theorem~\ref{THM:ratsum}.
For the necessity, we assume that~$f$ is $(\tau_x, \tau_y)$-summable in~$\overline{\bF(x)}(y)$, i.e.,
there exist~$g, h\in \overline{\bF(x)}(y)$ such that
\begin{equation}\label{EQ:qthm}
f = \tau_x(g) - g + \tau_y(h) - h.
\end{equation}
We decompose the rational function~$g$ into the form
\begin{equation}\label{EQ:qthm2}
g = \tau_y(g_1)-g_1 + g_2 + \frac{\lambda_1}{(y- \mu_1)^j} + \cdots + \frac{\lambda_n}{(y-\mu_n)^j},
\end{equation}

where~$g_1, g_2\in \bF(x, y)$, $g_2$ is a rational function with poles having order
different from~$j$,~$\lambda_k, \mu_k\in \overline{\bF(x)}$, and the~$\mu_k$'s are in distinct~$q^\bZ$-orbits.

\smallskip
\noindent {\bf Claim 1.} For each~$i\in \{1, 2, \ldots, m\}$, at least one element of the set
\[\Lambda := \{\mu_1, \ldots, \mu_n, \tau_x(\mu_1), \ldots, \tau_x(\mu_n)\}\]
is in the same $q^\bZ$-orbit as~$\beta_i$. For each element~$\eta\in \Lambda$, there is   one
element of~$\Lambda \setminus \{\eta\} \cup \{\beta_1, \ldots, \beta_m\}$ that
is $q^\bZ$-equivalent to~$\eta$.

\smallskip
\noindent {\bf Proof of Claim 1.}
The argument is the same as the proof of Claim 1 in the proof of Theorem~\ref{THM:ratsum}, replacing~$\si_x$ with~$\tau_x$
and~$\text{dres}_y(f,\beta,j)$ with $\text{qres}_y(f,\beta,j).$

\smallskip
According to Claim 1, we have either~$\beta_i \simq \mu'_1$ or~$\beta_i \simq \tau_x(\mu'_1)$
for some~$\mu'_1$ in~$\{\mu_1, \ldots, \mu_n\}$. We shall deal with each case separately.
The proofs of the next two claims are essentially the same proofs of the corresponding
claims in the proof of Theorem~\ref{THM:ratsum} after one replaces~$\si_x$ with~$\tau_x$.

\smallskip
\noindent {\bf Claim 2.} Assume $\beta_i \simq \mu'_1$.\\[0.05in]
(a)  Fix  $\beta_i$ and $j \in \bN, j \geq 2$ and assume that $\tau_x^{\ell} \beta_i \not \sim_{q^\bZ}\beta_i$ for $1 \leq \ell \leq j-1$. Then there exist $\mu'_1, \ldots \mu'_j \in \{\mu_1, \ldots, \mu_n\}$ such that
\begin{equation}\label{EQ:claim2d}
\tau_x^{j-1}(\beta_i)\simq\mu'_j, \text{ and }
\end{equation}
\begin{equation}\label{EQ:claim2e}
\tau_x(\mu'_{1})\simq \mu'_{2}, \, \, \tau_x(\mu'_{2})\simq\mu'_{3},  \, \, \ldots,  \, \, \tau_x(\mu'_{j-1}) \simq\mu'_{{j}}.
\end{equation}
(b) There exists~$t_i\in \bN, t_i \neq 0$ such that~$t_i\leq n$ and~$\tau_x^{t_i}(\beta_i)/\beta_i\in q^\bZ$. For the smallest such $t_i$, there exist $\mu'_1, \ldots \mu'_{t_i-1} \in \{\mu_1, \ldots, \mu_n\}$ such that
\begin{equation}\label{EQ:claim2f}
\tau_x(\mu'_{1})\simq \mu'_{2}, \, \, \tau_x(\mu'_{2})\simq\mu'_{3},  \, \, \ldots,  \, \, \tau_x(\mu'_{t_i-1}) \simq \mu'_{{t_i}}, \tau_x(\mu'_{t_i}) \simq \beta_i.
\end{equation}

\smallskip
\noindent {\bf Claim 3.} Assume $\beta_i \simq \tau_x(\mu'_1)$.\\[0.05in]
(a)  Fix  $\beta_i$ and $j \in \bN, j \geq 2$ and assume that $\tau_x^{\ell} \beta_i \not\sim_{q^\bZ} \beta_i$ for $1 \leq \ell \leq j-1$. Then there exist $\mu'_1, \ldots \mu'_j \in \{\mu_1, \ldots, \mu_n\}$ such that
\begin{equation}\label{EQ:claim3d}
\beta_i\simq\tau_x^{j}(\mu'_j), \text{ and }
\end{equation}
\begin{equation}\label{EQ:claim3e}
\mu'_{1}\simq \tau_x(\mu'_{2}), \, \, \mu'_{2}\simq\tau_x( \mu'_{3}),  \, \, \ldots,  \, \, \mu'_{j-1}\simq \tau_x(\mu'_{{j}}).
\end{equation}
(b) There exists~$t_i\in \bN$ such that~$t_i\leq n$ and~$\tau_x^{t_i}(\beta_i)-\beta_i\in \bZ$. For the smallest such $t_i$, there exist $\mu'_1, \ldots \mu'_{t_i} \in \{\mu_1, \ldots, \mu_n\}$ such that
\begin{equation}\label{EQ:claim3f}
\mu'_{1}\simq \tau_x(\mu'_{2}), \, \, \mu'_{2}\simq \tau_x(\mu'_{3}),  \, \, \ldots,  \, \, \mu'_{t_i-1} \simq \tau_x(\mu'_{t_i}), \mu'_{t_i} \simq \beta_i.
\end{equation}

\noindent Using these claims, we now complete the proof.  Although the remainder of the proof is similar to the proof of
Theorem~\ref{THM:ratsum}, there are a few differences so we will give the details.\\[0.1in]
Fix some $\beta_i$ and assume, as in Claim 2, that $\beta_i \simq \mu'_1$, that is $\beta_i= q^{\omega_0}\mu'_1$.  From Claim 2(b) we have that
$\tau_x(\mu'_k) = q^{\omega_k} \mu'_{k+1}$ for~$k=1, \ldots, t_i-1$ and~$\tau_x(\mu'_{t_i})= q^{\omega_{t_i}}\beta_i$.
We can conclude that~$\tau_x^{t_i}(\beta_i)/\beta_i =q^{s_i} \in q^\bZ$, where~$s_i = \omega_0 + \omega_1 + \cdots + \omega_{t_i}$. This implies that~$\beta_i = c_i x^{{s_i}/{t_i}}$ for some~$c_i\in \bF$
by Lemma~\ref{LM:qintlin}.  \\[0.1in]
We now wish to compare the $q$-discrete residues at the $\beta_i$ on the left side of \eqref{EQ:qthm2} with the $q$-disrete residues at the elements of $\Lambda$ on the right side of \eqref{EQ:qthm2}. The equivalences of \eqref{EQ:claim2f} yield

\begin{center}
\begin{tabular}{|c|c|}  \hline
$q^\bZ$-orbit &  Comparison of two sides of \eqref{EQ:qthm2}\\ \hline
$\mu'_1, \tau_x(\mu'_{t_i})$ & $\alpha_{i,j} \ = \ q^{-j\omega_{t_i}}\tau_x(\lambda_{t_i}) - q^{j\omega_0}\lambda_1$ \\ \hline
$\mu'_{t_i}, \tau_x(\mu'_{t_i-1}) $ & $0  \ = \  q^{-j\omega_{t_i-1}}\tau_x(\lambda_{t_i-1}) - \lambda_{t_i}$\\ \hline
$\mu'_{t_i-1}, \tau_x(\mu'_{t_i-2}) $ & $0  \ = \  q^{-j\omega_{t_i-2}}\tau_x(\lambda_{t_i-2}) - \lambda_{t_i-1}$\\ \hline
  \vdots      &  \vdots     \\ \hline
    $\mu'_3, \tau_x(\mu'_{2})$ & $0        \ = \  q^{-j\omega_{2}}\tau_x(\lambda_{2}) - \lambda_{3}$\\ \hline
    $\mu'_2, \tau_x(\mu'_{1})$ & $0        \ = \ q^{-j\omega_{1}}\tau_x(\lambda_{1}) - \lambda_{2}$\\ \hline
\end{tabular}
\end{center}
Using the equations in the last column to eliminate all intermediate terms, one can show that $\alpha_{i,j} = q^{-js_i}\tau_x^{t_i}(\lambda) - \lambda$ where $\lambda = q^{jw_0}\lambda_1$.\\[0.2in]
We now turn to the situation of Claim 3, that is we assume that $\beta_i = q^{-\omega_0}\tau_x(\mu'_1)$. From Claim 3(b), we have that $\mu'_k = q^{-\omega_k}\tau_x(\mu'_{k+1})$ for $k = 1, \ldots , t_i-1$ and $\mu'_{t_i} = q^{-\omega_{t_i}}\beta_i$. We can conclude that~$\tau_x^{t_i}(\beta_i)/\beta_i =q^{s_i} \in q^\bZ$, where~$s_i = \omega_0 + \omega_1 + \cdots + \omega_{t_i}$. This implies that~$\beta_i = c_i x^{{s_i}/{t_i}}$ for some~$c_i\in \bF$
by Lemma~\ref{LM:qintlin}.  \\[0.1in]
We now wish to compare the $q$-discrete residues at the $\beta_i$ on the left side of \eqref{EQ:qthm2} with the $q$-disrete residues at the elements of $\Lambda$ on the right side of \eqref{EQ:qthm2}. The equivalences of \eqref{EQ:claim3f} yield

\begin{center}
\begin{tabular}{|c|c|}  \hline
$q^\bZ$-orbit &  Comparison of two sides of \eqref{EQ:qthm2}\\ \hline
$\mu'_{t_i}, \tau_x(\mu'_{1})$ & $\alpha_{i,j} \ = \ q^{-j\omega_{0}}\tau_x(\lambda_{1}) - q^{j\omega_{t_i}}\lambda_{t_i}$ \\ \hline
$\mu'_{1}, \tau_x(\mu'_{2}) $ & $0  \ = \  q^{-j\omega_{1}}\tau_x(\lambda_{2}) - \lambda_{1}$\\ \hline
$\mu'_{2}, \tau_x(\mu'_{3}) $ & $0  \ = \  q^{-j\omega_{2}}\tau_x(\lambda_{3}) - \lambda_{2}$\\ \hline
  \vdots      &  \vdots     \\ \hline
    $\mu'_{t_i-2}, \tau_x(\mu'_{t_i-1})$ & $0        \ = \  q^{-j\omega_{t_i-2}}\tau_x(\lambda_{t_i-1}) - \lambda_{t_i-2}$\\ \hline
    $\mu'_{t_i-1}, \tau_x(\mu'_{t_i})$ & $0        \ = \ q^{-j\omega_{t_i-1}}\tau_x(\lambda_{t_i}) - \lambda_{t_i-1}$\\ \hline
\end{tabular}
\end{center}

Using the equations in the last column to eliminate all intermediate terms, one can show
that $\alpha_{i,j} = q^{-js_i}\tau_x^{t_i}(\lambda)-\lambda$ where $\lambda = q^{j\omega_{t_i}}\lambda_{t_i}$.
Since~$\lambda_{t_i}\in \bF(x)(\beta_i)$ and~$\beta_i\in \bF(x^{1/t_i})$, we have~$\lambda_{t_i} \in \bF(x^{1/t_i})$.
This completes the proof. \QED
\end{proof}

\begin{remark}\label{RM:t}
Let~$\alpha\in \bF(x^{1/t})$ with~$t\in \bN\setminus \{0\}$ and~$m\in \bZ$. We show how to reduce the problem
of deciding whether there exists~$\beta\in \bF(x^{1/t})$ such that
\begin{equation}\label{EQ:t}
  \alpha = q^m \tau_x^t(\beta) -\beta,
\end{equation}
to the usual $q$-summability problem as in Section~\ref{SUBSECT:uqshift}.
First, set~$\bar x = x^{1/t}$ and~$\bar \tau_{\bar x} = \tau_x^t$. Then~$\bar \tau_{\bar x}(\bar x) = q\bar x$, $\alpha \in \bF(\bar x)$ and~\eqref{EQ:t} is
equivalent to
\begin{equation*}
  \alpha = q^m \bar \tau_{\bar x}(\beta) - \beta \quad \text{for some~$\beta \in \bF(\bar x)$.}
\end{equation*}
Let~$\bar \beta = {\bar x}^m \beta$ and~$\bar \alpha= \bar{x}^m \alpha$. By a direct calculation, we
have that $q^m \bar \tau_{\bar x}(\beta) = q^m \tau_x({\bar \beta}/{\bar{x}^m})$ $  = \tau_{\bar x}(\bar \beta)/\bar{x}^m$,
which implies that
\[\bar{\alpha} = \tau_{\bar x}(\bar \beta) - \bar{\beta}. \]
Therefore, we can use the criterion on the $q$-summability in~$\bF(\bar x)$ in Section~\ref{SUBSECT:uqshift} to solve this problem.
\end{remark}

\begin{example}\label{EXAM:qratsumnru}
Let~$f = 1/(x^n+y^n)$ with~$n\in \bN\setminus \{0\}$. Over the field~$\overline{\bF(x)}$, we can decompose~$f$ into
the form~\eqref{EQ:exam}. By Theorem~\ref{THM:qratsum}, $f$ is~$(\tau_x, \tau_y)$-summable in~$\bF(x, y)$ if and only if
for all~$i\in \{1, \ldots, n\}$,
\begin{equation}\label{EQ:examq}
\frac{1}{n(\omega_i x)^{n-1}} = q^{-1} \tau_x(\gamma_i)- \gamma_i \quad \text{for
some~$\gamma_i\in \bF(x)$.}
\end{equation}
By Remark~\ref{RM:t}, the equation~\eqref{EQ:examq} is equivalent to
\[\frac{1}{n\omega_i^{n-1} x^{n}} = \tau_x(\bar \gamma_i)- \bar \gamma_i \quad \text{for
some~$\bar \gamma_i\in \bF(x)$.}\]
By the $q$-summability criterion in Section~\ref{SUBSECT:uqshift}, we have~$\frac{1}{n\omega_i^{n-1} x^{n}}$ is $\tau_x$-summable in~$\bF(x)$
for all~$i\in \{1, \ldots, n\}$. Therefore, $f$ is~$(\tau_x, \tau_y)$-summable in~$\bF(x, y)$. In fact, we have
\begin{equation}\label{EQ:examq2}
\frac{1}{x^n+y^n} = \tau_x\left(\frac{c_n}{x^n + y^n}\right)- \frac{c_n}{x^n + y^n} +
\tau_y\left(\frac{c_n}{q^nx^n + y^n}\right)- \frac{c_n}{q^nx^n + y^n},
\end{equation}
where~$c_n = q^n/(1-q^n)$. In order to translate the identity~\eqref{EQ:examq2} into that of usual sums, we define
the transformation~$\rho: \bF(x, y) \rightarrow \bF(q^a, q^b)$ by~$\rho(x)=q^a$,~$\rho(y)=q^b$ and~$\rho(c)=c$ for any~$c\in \bF$.
 Since~$q$ is not a root of unity, $\rho$ is an isomorphism between two fields~$\bF(x, y)$ and~$\bF(q^a, q^b)$.
Let~$\si_a$ and~$\si_b$ denote the shift operators with respect to~$a$ and~$b$, respectively.
Then~$\rho (\tau_x(f)) = \si_a (\rho(f))$ and~$\rho (\tau_y(f)) = \si_b (\tau_y(f))$ for all~$f\in \bF(x, y)$.
Assume that~$\bF = \bC$ and~$\abs{q} >1$. Now the identity~\eqref{EQ:examq2} leads to the identity
\begin{align*}
  \sum_{a\geq 1}\sum_{b\geq 1}\frac{1}{q^{an} + q^{bn}} & = \frac{q^n}{1-q^n}\left(\sum_{b\geq 1}\frac{-1}{q^n+q^{bn}}
+ \sum_{a\geq 1}\frac{-1}{q^n+q^{(a+1)n}} \right) \\
   & =\frac{q^n}{1-q^n}\left(\frac{-1}{q^n+q^{n}}
+ 2 \sum_{a\geq 1}\frac{-1}{q^n+q^{(a+1)n}} \right)\\
& = \frac{1}{1-q^n}\left(-\frac{1}{2} + 2{\rm L}_1(-1;\frac{1}{q^n})\right),\\
 & \\
 \text{where} \  {\rm L}_1(x;q) & = x \sum_{a=1}^\infty \frac{1}{p^a - x}, \ \ \ |x| < |p|, p = q^{-1}.
\end{align*}
The function ${\rm L}_1(x;q)$ is called the $q$-logarithm (see~\cite{Zudilin2005} and the references in this paper).
 Moreover, P.\ Borwein has proved in~\cite{Borwein1991} that~${\rm L}_1(-1;\frac{1}{q^n})$ is irrational, which implies
that the double sum above is also irrational.

In this way, we reduce double sums into single ones and then evaluate these in terms of values of special functions.
\end{example}

\begin{example}
A $q$-analogue of Tornheim's double sums is presented by Zhou et al. in \cite{Zhou2008}, which is of the form
\[T[r, s, t; \si, \tau] = \sum_{n, m =1}^{\infty} \frac{\si^n\tau^mq^{(r+t-1)n+(s+t-1)m}}{[n]_q^r [m]_q^s [n+m]_q^t},\]
where~$\si, \tau \in \{-1, 1\}$ and~$[n]_q := \sum_{j=0}^{n-1} = \frac{q^n-1}{q-1}$.
We consider the special case when~$\si=\tau=1$ and~$r=s=0$. By setting~$x=q^n$ and~$y= q^m$, the summand of~$T[0, 0, t; 1, 1]$ is
the rational function
\[f = \frac{(xy)^{t-1}(q-1)^t}{(xy-1)^t},\quad \text{where~$t\in \bN\setminus\{0\}$}.\]
We show that~$f$ is not~$(\tau_x, \tau_y)$-summable in~$\bF(x, y)$ for all~$t\in \bN\setminus\{0\}$.
The partial fraction decomposition of~$f$ with respect to~$y$ is
\[f = \sum_{i=0}^{t-1} \frac{\alpha_i}{(y-1/x)^{t-i}}, \quad \text{where~$\alpha_i = (q-1)^t \binom{t-1}{i}x^{-(t-i)}$}.\]
By Theorem~\ref{THM:qratsum}, $f$ is~$(\tau_x, \tau_y)$-summable in~$\bF(x, y)$ if and only if for all~$i\in \{0, 1, \ldots, t-1\}$, we have
\begin{equation}\label{EQ:exam3}
\alpha_i = q^{t-i}\tau_x(\gamma_i) - \gamma_i \quad \text{for some~$\gamma_i \in \bF(x)$.}
\end{equation}
By Remark~\ref{RM:t}, \eqref{EQ:exam3} is equivalent to
\[(q-1)^t \binom{t-1}{i} = \tau_x(\bar \gamma_i) - \bar \gamma_i \quad \text{for some~$\bar \gamma_i \in \bF(x)$.} \]
By the criterion for the~$q$-summability in Section~\ref{SUBSECT:uqshift}, the nonzero
constant \linebreak $(q-1)^t \binom{t-1}{i}\in \bF$ is not~$\tau_x$-summable in~$\bF(x)$, which implies
that~$f$ is not $(\tau_x, \tau_y)$-summable in~$\bF(x, y)$.
\end{example}

\bibliographystyle{plain}

\begin{thebibliography}{10}

\bibitem{Abramov1975}
S.~A. Abramov.
\newblock The rational component of the solution of a first order linear
  recurrence relation with rational right hand side.
\newblock {\em \v Z. Vy\v cisl. Mat. i Mat. Fiz.}, 15(4):1035--1039, 1090,
  1975.

\bibitem{Abramov1995b}
S.~A. Abramov.
\newblock Indefinite sums of rational functions.
\newblock In {\em ISSAC '95: Proceedings of the 1995 {I}nternational
  {S}ymposium on {S}ymbolic and {A}lgebraic {C}omputation}, pages 303--308, New
  York, NY, USA, 1995. ACM.

\bibitem{Andrews1993}
G.~E. Andrews and P.~Paule.
\newblock Some questions concerning computer-generated proofs of a binomial
  double-sum identity.
\newblock {\em J. Symb. Comput.}, 16(2):147--153, Aug. 1993.

\bibitem{Marshall2005}
J.~M. Ash and S.~Catoiu.
\newblock Telescoping, rational-valued series, and zeta functions.
\newblock {\em Trans. Amer. Math. Soc.}, 357(8):3339--3358 (electronic), 2005.

\bibitem{Basu2011}
A.~Basu.
\newblock On the evaluation of {T}ornheim sums and allied double sums.
\newblock {\em Ramanujan J.}, 26(2):193--207, 2011.

\bibitem{Borwein1991}
P.~B. Borwein.
\newblock On the irrationality of {$\sum(1/(q^n+r))$}.
\newblock {\em J. Number Theory}, 37(3):253--259, 1991.

\bibitem{ChenKauersSinger2012}
S.~Chen, M.~Kauers, and M.~F. Singer.
\newblock {Telescopers for Rational and Algebraic Functions via Residues}.
\newblock In J.~van~der Hoeven and M.~van Hoeij, editors, {\em {Proceedings of
  ISSAC 2012}}, pages 130--137, 2012.

\bibitem{ChenSinger2012}
S.~Chen and M.~F. Singer.
\newblock Residues and telescopers for bivariate rational functions.
\newblock {\em Adv. in Appl. Math.}, 49(2):111--133, 2012.

\bibitem{ChenHouMu2006}
W.~Y.~C. Chen, Q.-H. Hou, and Y.-P. Mu.
\newblock A telescoping method for double summations.
\newblock {\em J. Comput. Appl. Math.}, 196(2):553--566, 2006.

\bibitem{Egorychev1984}
G.~P. Egorychev.
\newblock {\em Integral representation and the computation of combinatorial
  sums}, volume~59 of {\em Translations of Mathematical Monographs}.
\newblock American Mathematical Society, Providence, RI, 1984.
\newblock Translated from the Russian by H. H. McFadden, Translation edited by
  Lev J. Leifman.

\bibitem{Moll2006}
O.~Espinosa and V.~H. Moll.
\newblock The evaluation of {T}ornheim double sums. {I}.
\newblock {\em J. Number Theory}, 116(1):200--229, 2006.

\bibitem{Moll2010}
O.~Espinosa and V.~H. Moll.
\newblock The evaluation of {T}ornheim double sums. {II}.
\newblock {\em Ramanujan J.}, 22(1):55--99, 2010.

\bibitem{Gelfand1971}
A.~O. Gel$'$fond.
\newblock {\em Residues and their applications}.
\newblock Mir Publishers, Moscow, 1971.
\newblock Translated from the Russian by V. M. Volosov.

\bibitem{Gosper1978}
R.~W. Gosper, Jr.
\newblock Decision procedure for indefinite hypergeometric summation.
\newblock {\em Proc. Nat. Acad. Sci. U.S.A.}, 75(1):40--42, 1978.

\bibitem{Huard1996}
J.~G. Huard, K.~S. Williams, and N.-Y. Zhang.
\newblock On {T}ornheim's double series.
\newblock {\em Acta Arith.}, 75(2):105--117, 1996.

\bibitem{LANG}
S.~Lang.
\newblock {\em Algebra}, volume 211 of {\em Graduate Texts in Mathematics}.
\newblock Springer-Verlag, New York, third edition, 2002.

\bibitem{Matusevich2000}
L.~F. Matusevich.
\newblock Rational summation of rational functions.
\newblock {\em Beitr\"age Algebra Geom.}, 41(2):531--536, 2000.

\bibitem{Mitrinovic1984}
D.~S. Mitrinovi{\'c} and J.~D. Ke{\v{c}}ki{\'c}.
\newblock {\em The {C}auchy method of residues}, volume~9 of {\em Mathematics
  and its Applications (East European Series)}.
\newblock D. Reidel Publishing Co., Dordrecht, 1984.
\newblock Theory and applications, Translated from the Serbian by
  Ke{\v{c}}ki{\'c}.

\bibitem{Mitrinovic1993}
D.~S. Mitrinovi{\'c} and J.~D. Ke{\v{c}}ki{\'c}.
\newblock {\em The {C}auchy method of residues. {V}ol. 2}, volume 259 of {\em
  Mathematics and its Applications}.
\newblock Kluwer Academic Publishers Group, Dordrecht, serbian edition, 1993.
\newblock Theory and applications.

\bibitem{Mordell1960}
L.~J. Mordell.
\newblock Note on a harmonic double series.
\newblock {\em J. London Math. Soc.}, 35:401--402, 1960.

\bibitem{Paule1995b}
P.~Paule.
\newblock Greatest factorial factorization and symbolic summation.
\newblock {\em J. Symbolic Comput.}, 20(3):235--268, 1995.

\bibitem{Picard1897}
{\'E}.~Picard and G.~Simart.
\newblock {\em Th\'eorie des fonctions alg\'ebriques de deux variables
  ind\'ependantes. {T}ome {I}, {II}}.
\newblock Chelsea Publishing Co., Bronx, N.Y., 1971.
\newblock R{\'e}impression corrig{\'e}e (en un volume) de l'{\'e}dition en deux
  volumes de 1897 et 1906.

\bibitem{Pirastu1995a}
R.~Pirastu.
\newblock Algorithms for indefinite summation of rational functions in maple.
\newblock {\em The Maple Technical Newsletter}, 2(1):29--38, 1995.

\bibitem{Pirastu1995b}
R.~Pirastu and V.~Strehl.
\newblock Rational summation and {G}osper-{P}etkov\v sek representation.
\newblock {\em J. Symbolic Comput.}, 20(5-6):617--635, 1995.
\newblock Symbolic computation in combinatorics (Ithaca, NY, 1993).

\bibitem{Poincare1887}
H.~Poincar{\'e}.
\newblock Sur les r\'esidus des int\'egrales doubles.
\newblock {\em Acta Math.}, 9(1):321--380, 1887.

\bibitem{Tornheim1950b}
L.~Tornheim.
\newblock A double series summed geometrically.
\newblock {\em Amer. Math. Monthly}, 57:535--538, 1950.

\bibitem{Tornheim1950}
L.~Tornheim.
\newblock Harmonic double series.
\newblock {\em Amer. J. Math.}, 72:303--314, 1950.

\bibitem{Weintraub2009}
S.~H. Weintraub.
\newblock {\em Galois theory}.
\newblock Universitext. Springer, New York, second edition, 2009.

\bibitem{Zhou2008}
X.~Zhou, T.~Cai, and D.~M. Bradley.
\newblock Signed {$q$}-analogs of {T}ornheim's double series.
\newblock {\em Proc. Amer. Math. Soc.}, 136(8):2689--2698, 2008.

\bibitem{Zudilin2005}
W.~Zudilin.
\newblock Approximations to {$q$}-logarithms and {$q$}-dilogarithms, with
  applications to {$q$}-zeta values.
\newblock {\em Zap. Nauchn. Sem. S.-Peterburg. Otdel. Mat. Inst. Steklov.
  (POMI)}, 322(Trudy po Teorii Chisel):107--124, 253--254, 2005.

\end{thebibliography}

%
%

\end{document}